\documentclass[12pt,reqno]{amsart}
\usepackage[all]{xy}
\usepackage{amssymb,amsmath,amsthm,mathrsfs}
\usepackage{cases}
\usepackage{enumerate}
\usepackage{hyperref}
\usepackage{dsfont}
\usepackage{tikz}

\numberwithin{equation}{section}
\allowdisplaybreaks[3]
\makeatother
\newtheorem{thm}{Theorem}[section]

\newtheorem{lem}[thm]{Lemma}

\theoremstyle{definition}

\newtheorem{rem}[thm]{Remark}

\numberwithin{equation}{section}

\theoremstyle{definition}
\newtheorem*{ack}{Acknowledgements}

\newcommand{\abs}[1]{\left\lvert#1\right\rvert}
\newcommand{\re}{\textup{Re}}
\newcommand{\im}{\textup{Im}}

\newcommand\cube{\begin{tikzpicture}[scale=2.3]
    \coordinate (A1) at (0, 0);
    \coordinate (A2) at (0, 0.1);
    \coordinate (A3) at (0.1, 0.1);
    \coordinate (A4) at (0.1, 0);
    \coordinate (B1) at (0.03, 0.03);
    \coordinate (B2) at (0.03, 0.13);
    \coordinate (B3) at (0.13, 0.13);
    \coordinate (B4) at (0.13, 0.03);

    \draw (A1) -- (A2);
    \draw (A2) -- (A3);
    \draw (A3) -- (A4);
    \draw (A4) -- (A1);
    \draw[densely dotted] (A1) -- (B1);
    \draw[densely dotted] (B1) -- (B2);
    \draw (A2) -- (B2);
    \draw (B2) -- (B3);
    \draw (A3) -- (B3);
    \draw (A4) -- (B4);
    \draw (B4) -- (B3);
    \draw[densely dotted] (B1) -- (B4);
\end{tikzpicture}}

\newcommand\tinycube{\begin{tikzpicture}[scale=1.3]
    \coordinate (A1) at (0, 0);
    \coordinate (A2) at (0, 0.1);
    \coordinate (A3) at (0.1, 0.1);
    \coordinate (A4) at (0.1, 0);
    \coordinate (B1) at (0.03, 0.03);
    \coordinate (B2) at (0.03, 0.13);
    \coordinate (B3) at (0.13, 0.13);
    \coordinate (B4) at (0.13, 0.03);

    \draw (A1) -- (A2);
    \draw (A2) -- (A3);
    \draw (A3) -- (A4);
    \draw (A4) -- (A1);
    \draw[densely dotted] (A1) -- (B1);
    \draw[densely dotted] (B1) -- (B2);
    \draw (A2) -- (B2);
    \draw (B2) -- (B3);
    \draw (A3) -- (B3);
    \draw (A4) -- (B4);
    \draw (B4) -- (B3);
    \draw[densely dotted] (B1) -- (B4);
\end{tikzpicture}}

\begin{document}

\title[Euler-Kronecker constant of abelian cubic fields]{Extreme values of Euler-Kronecker constants of abelian cubic fields}

\author[Y. Toma]{Yuichiro Toma}
\address{Global Education Center, Waseda University, 1-6-1 Nishiwaseda, Shinjuku-ku, Tokyo 169-8050, Japan.}

\email{yuichiro.toma@aoni.waseda.jp}

\makeatletter
\@namedef{subjclassname@2020}{\textup{2020} Mathematics Subject Classification}
\makeatother
\subjclass[2020]{11M06, 11R16, 11R11}
\keywords{Euler-Kronecker constants, abelian cubic fields, resonance method}

\begin{abstract}
Euler-Kronecker constants are analogues of the Euler-Mascheroni constant for number fields. Assuming the Generalized Riemann Hypothesis for Hecke $L$-functions, we obtain extreme values of Euler-Kronecker constants of abelian cubic fields. We also prove a similar result for quadratic fields.
\end{abstract}

\maketitle
\section{Introduction and statement of results}\label{intro}
\subsection{Introduction}
Let $K$ be a number field. For $\re(s)>1$, the Dedekind zeta-function of $K$ is defined by
\begin{align*}
    \zeta_K(s)= \sum_{\mathfrak{a}} \frac{1}{N(\mathfrak{a})^s} = \prod_{\mathfrak{p}}\frac{1}{1-N(\mathfrak{{p})^{-s}}}.
\end{align*}
Here, $\mathfrak{a}$ (resp. $\mathfrak{p}$) runs over nonzero ideals (resp. prime ideals) in $\mathcal{O}_K$. It can be continued analytically to $\mathbb{C}$ except for $s=1$. The Dedekind zeta function $\zeta_K(s)$ is an important object in number theory because it relates invariants of $K$ via its residue at $s=1$. One of these invariants is known as the Euler-Kronecker constant of $K$, which is defined by
\begin{align*}
    \gamma_K = \lim_{s \to 1} \left( \frac{\zeta_K^\prime}{\zeta_K}(s) +\frac{1}{s-1} \right).
\end{align*}
The case of $K=\mathbb{Q}$, $\gamma_\mathbb{Q}=\gamma$ is the Euler-Mascheroni constant. Let $c_n$ denote the coefficient of Laurent expansion of $\zeta_K(s)$ at $s=1$. The Euler-Kronecker constant can also be written by $c_0/c_{-1}$. 

The Euler-Kronecker constant $\gamma_K$ was first introduced by Ihara~\cite{Ihara1} (Ihara also introduced Euler-Kronecker constants in the function fields settings). 
Following Ihara's pioneering works, the properties of Euler-Kronecker constants have been investigated by many authors. In particular, Euler-Kronecker constants have found applications to the study of other arithmetic invariants, such as the class numbers. 
Dixit~\cite{Dixit} provided a framework showing that suitable bounds for the Euler-Kronecker constants along a tower of number fields imply the Generalized Brauer-Siegel conjecture. Moreover, in the almost normal case, Dixit established the Generalized Brauer-Siegel conjecture by proving an appropriate bound for $\gamma_K$.

In fact, the bound of $\gamma_K$ required to establish the Generalized Brauer-Siegel conjecture is not particularly strong.
The size of $\gamma_K$ has been studied by Ihara~\cite{Ihara1}, who showed that $\gamma_K$ satisfies the following bounds:
\begin{align}
\label{IharaBound}
    -\log \sqrt{\abs{d_K}}+c \leq \gamma_K \leq  \frac{\log \sqrt{\abs{d_K}}+1}{\log \sqrt{\abs{d_K}}-1}(2 \log \log \sqrt{\abs{d_K}}+1)
\end{align}
for some constant $c$, where $d_K$ denotes the discriminant of $K$, and the upper bound is a conditional bound assuming the Generalized Riemann Hypothesis (GRH) for the Dedekind zeta-function of $K$. Tsfasman~\cite{T} showed that the order of lower bound is $ -\log \sqrt{\abs{d_K}}$ up to constant. 

The case of abelian fields, Euler-Kronecker constants are well studied. For cyclotomic fields, Ihara~\cite{Ihara1} showed that $\gamma_{\mathbb{Q}(\zeta_q)} = O((\log q)^2)$ assuming GRH, and this bound was improved to $O(\log q\log\log q)$ by Badzyan~\cite{Bad}. Ihara conjectured that $\gamma_{\mathbb{Q}(\zeta_q)}$ is positive. More precisely, he conjectured that for sufficiently large $q$ it holds that
\begin{align*}
    \left(a_1-\varepsilon \right) \log q <\gamma_{\mathbb{Q}(\zeta_q)}<\left(a_2+\varepsilon \right)\log q
\end{align*}
for $0<a_1,a_2 \leq 2$. On the other hand, Ford, Luca and Moree~\cite{FLM} found a counterexample to Ihara's conjecture; in particular, they showed that $\gamma_{\mathbb{Q}(\zeta_q)} = -0.18237\dots$ when $q=964477901$, and $\gamma_{\mathbb{Q}(\zeta_q)}$ can be negative infinitely often assuming a weak form of the Hardy-Littlewood conjecture. In~\cite{FLM}, they proposed that $\gamma_{\mathbb{Q}(\zeta_q)}$ for a prime $q$ satisfies
\begin{align*}
    \liminf_{q\to\infty} \frac{\gamma_{\mathbb{Q}(\zeta_q)}}{\log q \log \log \log q } =-1 \quad \text{and} \quad \limsup_{q\to\infty} \frac{\gamma_{\mathbb{Q}(\zeta_q)}}{\log q} = 1.
\end{align*}

For another class of abelian extensions, namely quadratic extensions over $\mathbb{Q}$, Euler-Kronecker constants can be expressed explicitly in terms of the logarithmic derivative of quadratic Dirichlet $L$-functions. Let $K= \mathbb{Q}(\sqrt{D})$ denote a quadratic field with a fundamental discriminant $D$. Mourtada and Kumar Murty~\cite{MKM} proved that 
\begin{align*}
\pm \gamma_{\mathbb{Q}(\sqrt{D})} \geq \begin{cases}
    \log \log \abs{D}+O(1) & (\text{unconditionally}) \\
    \log \log \abs{D} +\log \log \log \abs{D} + O(1) & (\text{under GRH}) \\
\end{cases}
\end{align*}
for infinitely many $D$. They also conjectured that the extreme values of $\gamma_{\mathbb{Q}(\sqrt{D})}$, as $D$ ranges over fundamental discriminants with $\abs{D} \leq x$, satisfy
\begin{align*}
    \max_{\abs{D} \leq x} \pm \gamma_{\mathbb{Q}(\sqrt{D})} = \log \log x +\log \log \log x + O(1).
\end{align*}
Assuming GRH, it is possible to prove that such large values exist, namely
\begin{align}
\label{RM-qudratic}
    \max_{\abs{D} \leq x} \pm \gamma_{\mathbb{Q}(\sqrt{D})} \geq \log \log x + \log \log \log x - C_{\text{quad}}^\pm-\varepsilon,
\end{align}
where $C_{\text{quad}}^\pm$ are constants given by
\begin{align}
\begin{split}
\label{RM-Cqud}
    C_{\text{quad}}^+ &= \log 4 - \sum_{p} \frac{\log p}{(p+1)^2(p-1)} + 1 = 2.2541\dots, \\
    C_{\text{quad}}^- &= \log 4 + \sum_{p} \frac{\log p}{(p+1)^2(p-1)} + 1 +2\gamma = 3.6729\dots.
\end{split}
\end{align}
Very recently, Dong and Li~\cite{DL} proved the existence of the large values of $-\gamma_{\mathbb{Q}(\sqrt{D})}$ under GRH with $C_{\text{quad}}^- = 4.1106\dots$. The constant $C_{\text{quad}}^-$ in \eqref{RM-Cqud} improves slightly on the corresponding constant in~\cite{DL}. It is also possible to extend conditional large values to the case of $\gamma_{\mathbb{Q}(\sqrt{D})}$. 
Therefore, we give a proof of the bound \eqref{RM-qudratic} in Section~\ref{quadraticbound}. On the other hand, by the heuristics of Lamzouri~\cite{Lam15}, one could predict that the optimal values of constants are $C_{\text{quad}}^+ = -(A_0 +2\zeta^\prime(2)/\zeta(2)) = 0.3211\dots$ and $C_{\text{quad}}^- =-(A_0-2\gamma)= 0.3356\dots$, where 
\begin{align*}
    A_0= \int_0^1 \tanh y \frac{dy}{y}+ \int_1^\infty (\tanh y-1) \frac{dy}{y} = 0.8187\dots.
\end{align*}

The method of Dong and Li~\cite{DL} relies on the resonance method, which was developed by Soundararajan~\cite{Sou} and Bondarenko and Seip~\cite{BS1, BS2} to obtain large values of the Riemann zeta-function on the critical line. Since then, the resonance methods are widely applied to other number-theoretic objects. More precisely, the technique of Dong--Li and ours is called the long resonance method, which was developed by Aistleitner, Mahatab, Munsch and Peyrot~\cite{AMMP} to find the large values of $\abs{L(1,\chi)}$ of prime moduli. Yang~\cite{Y} adapted the long resonator method for the case of logarithmic derivatives of $L^\prime/L(\sigma,\chi)$ for $1/2 < \sigma \leq 1$. 


\subsection{Main result}
In this paper, we prove the existence of extreme values of Euler-Kronecker constants of abelian cubic fields. By studying extreme values of the logarithmic derivatives of Artin $L$-functions, Cho and Kim~\cite{CK13} proved that there exist infinitely many abelian cubic fields such that 
\begin{align*}
    \gamma_K \leq -2\log \log \abs{d_K} + O(\log\log\log \abs{d_K})
\end{align*}
holds, and there exist infinitely many abelian cubic fields such that 
\begin{align*}
    \gamma_K \geq \log \log \abs{d_K} + O(\log\log\log \abs{d_K})
\end{align*}
holds. They also showed that, under certain assumptions including GRH, the leading terms in these bounds are optimal. 
In the non-abelian case, Mine~\cite{M} studied, in the appendix of~\cite{M}, the limiting distributions of Euler-Kronecker constants of cubic fields over $\mathbb{Q}$ with Galois closure having Galois group $S_3$.
Akbary and Hamieh~\cite{AH} also obtained the limiting distribution results for Euler-Kronecker constants of cubic extensions of $\mathbb{Q}(\omega)$, where $\omega=e^{2\pi i/3}$. 
 
Let $x$ be sufficiently large. Let $\mathcal{A}_3(x)$ denote the set of abelian cubic fields $K$ with $0<d_K \leq x$. 
From the results of Cohn~\cite{Cohn} and Darbar et al.~\cite{DDLL} we have
\begin{align*}
    \#\mathcal{A}_3(x) = \frac{c_3}{2} x^\frac{1}{2} + O\left( x^{\frac{5}{16}+\varepsilon}\right), 
\end{align*}
where $c_3$ is a positive constant given in \eqref{C_3}. Now, we state the main result of this article.

\begin{thm}
\label{LVEKCcubic}
    Assume GRH for Hecke $L$-functions over $\mathbb{Q}(\omega)$. Let $x$ be sufficiently large. Then we have
    \begin{align*}
        \max_{K \in \mathcal{A}_3(x)} \gamma_K \geq  \log \log x + \log \log \log x -C_{\operatorname{cubic}}^+ - \varepsilon
    \end{align*}
    and
    \begin{align*}
        \max_{K \in \mathcal{A}_3(x)} -\gamma_K \geq  2\log \log x + 2\log \log \log x - C_{\operatorname{cubic}}^- - \varepsilon,
    \end{align*}
    where $C_{\operatorname{cubic}}^\pm$ are given by
    \begin{align}
    \begin{split}
    \label{Cintheorem}
        C_{\operatorname{cubic}}^+ &= 3\log 2 + \sum_{p}\frac{(3p+10) \log p}{(p^3-1)(p+2)} + 1 = 5.7972 \dots, \\
        C_{\operatorname{cubic}}^- & = 4 \log 2 + 4 \sum_p \frac{\log p}{(p^3-1)(p+2)}+2+3\gamma = 9.4207\dots.
    \end{split}
    \end{align}
\end{thm}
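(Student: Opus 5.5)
An abelian cubic field $K$ of conductor $f$ has $d_K=f^2$. It corresponds to a pair of conjugate cubic Dirichlet characters $\chi,\bar\chi$ mod $f$, and $\zeta_K(s)=\zeta(s)L(s,\chi)L(s,\bar\chi)$. Hence
\begin{align*}
\gamma_K=\gamma+2\,\re\frac{L'}{L}(1,\chi).
\end{align*}
So the task is to find cubic characters of conductor $f\le \sqrt{x}$ for which $\re\, L'/L(1,\chi)$ is large, of either sign.

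The plan is to use the long resonance method of Aistleitner--Mahatab--Munsch--Peyrot, as adapted to $L'/L$ by Yang and to the quadratic family by Dong--Li. The relevant parametrization is the one behind Cohn's count: primitive cubic characters are in bijection with cubic residue symbols $\chi_c=(\cdot/c)_3$ on $\mathbb{Z}[\omega]$, with $c$ squarefree and primary, up to conjugation. Under GRH for the Hecke $L$-functions over $\mathbb{Q}(\omega)$, I will first prove a short-sum approximation. For $Y=(\log x)^{A}$ and all but a negligible set of $c$,
\begin{align*}
-\frac{L'}{L}(1,\chi_c)=\sum_{n\le Y}\frac{\Lambda(n)\chi_c(n)}{n}+o(1).
\end{align*}
This comes from a contour shift to $\re s=1/2+\varepsilon$ together with the GRH bound $\log L\ll \log f/\log\log f$ there. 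It may need a truncated Perron formula with a smoothing weight, in the style of Ihara's explicit formula.

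Next I will form the weighted moments
\begin{align*}
S_1=\sum_{c}\Phi\Big(\frac{N(c)}{X}\Big)|R(\chi_c)|^2\,\re\sum_{n\le Y}\frac{\Lambda(n)\chi_c(n)}{n}\qquad\text{and}\qquad S_0=\sum_c\Phi\Big(\frac{N(c)}{X}\Big)|R(\chi_c)|^2 .
\end{align*}
Here $R(\chi)=\sum_{m\in\mathcal M} r(m)\chi(m)$ is a long resonator supported on $Y$-smooth integers, with multiplicative weights $r(p)=\pm\sqrt{\ldots}$ for $p\le Y$. Expanding gives character sums $\sum_c\chi_c(m_1\bar m_2 n)$ (or $\chi_c(m_1 m_2^2 n)$). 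By cubic reciprocity and GRH these sums have a main term exactly when the argument is a perfect cube, with an explicit local density at each prime, and a power-saving error otherwise. The ratio $S_1/S_0$ then becomes an Euler-type sum over $p\le Y$ with the diagonal condition "cube". Choosing $r(p)=\frac{\log x\log\log x}{p\log p}$ for $p$ in a suitable range, as in AMMP, yields $\sum_{p\le Y}\frac{\log p}{p}\cdot(\text{bias})$. This produces $\log\log x+\log\log\log x$ from $\sum_{p\lesssim\log x\log\log x}\log p/p$.

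The constants $C^\pm_{\rm cubic}$ come from the remaining contributions:
\begin{itemize}
\item the primes dividing the conductor density, which give the factors $(p^3-1)(p+2)$;
\item the loss $\log 2$, respectively $\log 4$, from the resonator range;
\item the $\gamma$ term;
\item the asymmetry of cubic characters.
\end{itemize}
The asymmetry is the source of the factor $2$ in the lower bound for $-\gamma_K$. A cubic character takes values in $\{1,\omega,\omega^2\}$, so $\re\chi(p)\in\{1,-1/2\}$. Making $\chi(p)=1$ for many $p$ produces $-L'/L$ large positive, hence $-\gamma_K\approx 2\sum\log p/p$. Forcing $\re\chi(p)=-1/2$ only gives half of that, hence the $\log\log x$ coefficient $1$ for $+\gamma_K$. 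Accordingly, the resonator for the $+$ case uses $\overline{\chi}(p)$ weights aligned with $\omega$ rather than with $1$. Finally, positivity of $\Phi$ and of $|R|^2$ gives some $c$ with $N(c)\asymp X$ and $\pm\gamma_K\ge S_1/S_0$. Take $X=\sqrt{x}$ so that $d_K=N(c)^2$-type conductors are $\le x$; note that $\log\log$ of $\sqrt{x}$ differs from $\log\log x$ only by a constant, and this is absorbed into $C$.

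The main obstacle is the evaluation of the off-diagonal and diagonal character sums over cubic conductors with a resonator of length $x^{1/2-\varepsilon}$ or longer. This requires GRH-strength bounds for $\sum_c\chi_c(\alpha)$ uniform in $N(\alpha)$, together with careful bookkeeping of the local factors at $3$ and at primes where $c$ is divisible. The first thing I would try is to restrict to $c$ coprime to all $p\le Y$, sieved by $\mu^2$ and the primary condition. I would then compute the diagonal density prime by prime, checking that it produces exactly the constants in \eqref{Cintheorem}.
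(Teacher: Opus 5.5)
Your skeleton matches the paper's:
\begin{itemize}
\item the reduction $\gamma_K=\gamma+2\re\frac{L'}{L}(1,\chi)$ with conductors up to $z=\sqrt{x}$;
\item a short-sum approximation of $-\frac{L'}{L}(1,\chi)$;
\item expansion of $\sum_\chi\chi(n)|R(\chi)|^2$ into sums $\sum_\chi\chi(nkl^2)$, evaluated by the GRH cubic character-sum lemma (main term only on cubes, with density $\prod_{p\mid m}\frac{p}{p+2}$);
\item an $\omega$-twisted resonator for the other sign, which is indeed why the coefficient is $1$ rather than $2$.
\end{itemize}
Some of your choices differ from the paper's but are harmless: taking the short-sum approximation for every character directly from GRH, using a smooth weight, and restricting to conductors prime to $3$ (the paper notes that subfamily suffices). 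The paper instead uses a zero-density exceptional set and a sharp cutoff.

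The gap is the resonator. It is the only place the stated constants can come from, and the choice you specify fails. The weights $r(p)=\frac{\log x\log\log x}{p\log p}$ are not the AMMP-type weights the paper uses. They exceed $1$ for all $p$ below about $\log x$, so $\sum_n r(n)\chi(n)=\prod_p(1-r(p)\chi(p))^{-1}$ is meaningless there. On the range where $r(p)\le 1$ they give only $\sum_p\frac{\log p}{p}r(p)=O(1)$, not $\log\xi$.

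Nor can the weights be close to $1$ on all primes up to $Y=(\log x)^A$. The error terms of the lemma contribute about $z^{1/2+\varepsilon}\bigl(\sum_k r(k)\bigr)^2\approx z^{1/2+\varepsilon}\prod_p(1-r(p))^{-2}$. The merely subexponential dependence $f(m_0)$ on the cube-free part is what makes infinite support possible at all. So weights near $1$ are affordable only on primes up to $\xi\asymp\log z\log\log z$.

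The paper takes $r(p)=1-p/\xi$ for $p\le\xi=B\log z\log\log z$ and $r(p)=0$ otherwise. Then $|R(\chi)|^2\le\prod_{p\le\xi}(\xi/p)^2=z^{2B(1+o(1))}$. The condition $\frac12+2B<1$ gives $B\to\frac14$, hence the $\log 4$. The linear taper produces the $-1$ through $\sum_{q\le\xi}\frac{r(q)\log q}{q}=\log\xi-\gamma-\sum_{k\ge2}\sum_q\frac{\log q}{q^k}-1+o(1)$.

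The second missing piece is the diagonal evaluation. You postpone it, but it produces every term in $C^\pm_{\operatorname{cubic}}$. The paper writes $k=k_1k_2^2k_3^3k_4^3$ and $l=l_1l_2^2l_3^3l_4^3$, sums the geometric series in $r(p)^3$, and lists the ways $q^mk_1k_2^2l_1^2l_2$ can be a cube. It finds that $n=q^m$ with $3\nmid m$ enters with weight
\begin{align*}
\frac{r(q)\bigl(1+r(q)+r(q)^2\bigr)}{\bigl(1+\frac{2}{q}\bigr)\bigl(1-r(q)^3\bigr)^2+r(q)^2+2r(q)^3+r(q)^4-r(q)^6}=\frac{r(q)}{1-r(q)+r(q)^2}\Bigl(1+O\Bigl(\frac{1}{\xi}\Bigr)\Bigr),
\end{align*}
using $1+r^2+r^4=(1+r+r^2)(1-r+r^2)$. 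This weight is $\ge r(q)$ since $0\le r(q)\le 1$. The terms with $3\mid m$ are treated with the density $\frac{q}{q+2}$, which yields the sums over $(q^3-1)(q+2)$.

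For the twist, it is cleanest to expand $|R_\omega(\chi)|^2=\sum_{k,l}\omega^{\Omega(kl^2)}r(k)r(l)\chi(kl^2)$, where $\Omega$ counts prime factors with multiplicity. The denominator is then unchanged, and the diagonal term for $n=q^m$ is multiplied by $\omega^{-m}$. So $\re\,\omega^{-m}=-\frac12$ for $3\nmid m$ gives exactly the halving you anticipated, while the cube terms keep their full weight and enter with a minus sign.

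Finally, a note on bookkeeping. The shift $\log\log\sqrt{x}=\log\log x-\log 2$ is counted twice for $-\gamma_K$. Together with $\log B\to-\log 4$, the paper's argument gives $6\log 2$ in $C^-_{\operatorname{cubic}}$, as does its own relation $\mathfrak{c}^-_{\operatorname{cubic}}=\frac{C^-_{\operatorname{cubic}}-\gamma}{2}-\log 2$. The displayed formula has $4\log 2$ instead, so do not expect to reproduce that particular term.
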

This is a cubic analogue of \eqref{RM-qudratic}. Assuming GRH, we refine the estimates for the extreme values of $\gamma_K$ of abelian cubic fields $K$ due to Cho and Kim~\cite{CK13} by adding a secondary term. 
Even if we restrict abelian cubic fields from $\mathcal{A}_3(x)$ to its subfamily of cubic fields in which $3$ ramifies (resp. does not ramify), we obtain the same result. 

\section{Cubic characters and cubic $L$-functions}
In this section, we review some basic properties of abelian cubic fields and cubic Dirichlet characters. Let $K$ be an abelian cubic field over $\mathbb{Q}$, so that $[K:\mathbb{Q}]=3$ and $\operatorname{Gal}(K/\mathbb{Q}) \simeq \mathbb{Z}/3\mathbb{Z}$. 
Let $f$ denote the conductor of $K$, which is the smallest positive integer $m$ such that the cyclotomic field $\mathbb{Q}(e^{2\pi i/m})$ contains $K$. 
The discriminant $d_K$ of $K$ is given by $d_K=f^2$. 
It is known, for example~\cite{Cohen, Mayer}, that $f$ is of the form $f = 9^ap_1 \cdots p_t$ with $a=0$ or $1$, depending on whether $3$ is ramified or not in $K$, and $p_i \equiv 1 \pmod 3$. 
There are $2^{a+t-1}$ cyclic cubic fields of discriminant $f$. 
Moreover, if $K$ is an abelian cubic field of conductor $f$, then the Dedekind zeta-function of $K$ can be factorized into
\begin{align*}
    \zeta_K(s) = \zeta(s) L(s,\chi)L(s,\overline{\chi}),
\end{align*}
where $\chi$ is a primitive cubic Dirichlet character conductor $f$. Taking logarithmic derivatives and comparing the constant terms in the Laurent expansions at $s=1$, we have
\begin{align*}
    \gamma_K  = \gamma + 2 \re \frac{L^\prime(1,\chi)}{L(1,\chi)}.
\end{align*}
Since $\gamma = 0.57721\dots$ is bounded, extreme values of $\gamma_K$ are dominated by those values of $L^\prime/L(1,\chi)$. Therefore, if we obtain extreme values of $L^\prime/L(1,\chi)$, where $\chi$ is a primitive cubic Dirichlet character conductor $f$, then the corresponding extreme values of $\gamma_K$ can be obtained for the abelian cubic field $K$ of discriminant $d_K=f^2$.

Now, we turn to find extreme values of the logarithmic derivatives of cubic Dirichlet $L$-functions at $s=1$. Before stating the result, we summarize several basic properties of cubic characters. The following facts are available, for example, at~\cite{BY10, DFK04, DDLL}.

The ring of integers $\mathbb{Z}[\omega]$ of imaginary quadratic field $\mathbb{Q}(\omega)$ is called the Eisenstein ring. Since every ideal of $\mathbb{Z}[\omega]$ is principal, each nonzero ideal $(n) \subseteq \mathbb{Z}[\omega]$ with $(n,3) = 1$ has a unique generator $n \equiv 1 \pmod 3$. We call such $n \in \mathbb{Z}[\omega]$ a primary element in $\mathbb{Z}[\omega]$. 

Let $\mathcal{F}_3$ be the set of primitive cubic Dirichlet characters conductor $f$. 
Every $\chi \in \mathcal{F}_3$ can be written as
\begin{align*}
    \chi= \chi_9^{ae_0}\chi_{p_1}^{e_1} \cdots \chi_{p_t}^{e_t},
\end{align*}
where $a=0$ or $1$, $p_1,\dots,p_t \equiv 1 \pmod 3$, $e_0, e_1,\dots,e_t \in \{1,2\}$, $\chi_9$ is a cubic character of conductor $9$ and $\chi_{p_\ell}$ is a primitive cubic character of conductor $p_{\ell}$. 

It is known that cubic Dirichlet characters correspond to the cubic residue symbol in the Eisenstein ring $\mathbb{Z}[\omega]$. Indeed, any primitive cubic Dirichlet characters of conductor $q$ coprime to $3$ are of the form $m \mapsto \left(\frac{m}{n}\right)_3$ for some $n \in \mathbb{Z}[\omega]$, $n \equiv 1 \pmod 3$, $n$ squarefree and not divisible by any rational primes, with norm $N(n) =q$ (see Lemma 2.1 in~\cite{BY10}). For a rational prime $p \equiv 1 \pmod 3$, $p$ splits in $\mathbb{Z}[\omega]$ to $p=\pi \overline{\pi}$ with $N(\pi) = N(\overline{\pi})=p$. 
Therefore, for $m \in \mathbb{Z}$, a primitive cubic Dirichlet character $\chi$ of conductor $p \equiv 1 \pmod 3$ is given by $\chi(m) = \left( \frac{m}{\pi}\right)_3$, where $ \left( \frac{m}{\pi}\right)_3$ is defined by $\left( \frac{m}{\pi}\right)_3 \equiv m^{\frac{N(\pi)-1}{3}} \pmod \pi$. We find that the other primitive cubic character $\chi^2$ of conductor $p$ is given by $\chi^2(m) = \left( \frac{m}{\overline{\pi}}\right)_3=\overline{\left( \frac{m}{\pi}\right)_3}$.

Since there are exactly two such characters of conductor $p \equiv 1 \pmod 3$, each is the square of the other. Therefore, there are $2^{a+t}$ primitive cubic Dirichlet characters of conductor $f$. Let 
\begin{align*}
    \mathcal{F}_3(z) = \left\{ \chi \in \mathcal{F}_3  \mid \operatorname{cond}(\chi) \leq z \right\}.
\end{align*}
Then, it follows that $\#\mathcal{F}_3(z) = 2 \#\mathcal{A}_3(z^2)$ since $\chi$ and $\chi^2$ correspond to the same abelian cubic field. Hence, up to complex conjugation, $\mathcal{F}_3(z)$ has a one-to-one correspondence to $\mathcal{A}_3(z^2)$.

Now, we state our result, which is a cubic analogue of Yang's result.

\begin{thm}
\label{LVlogderL}
    Let $\chi$ be a primitive cubic Dirichlet character of conductor $f$. Assuming GRH for Hecke $L$-functions over $\mathbb{Q}(\omega)$, we have
    \begin{align*}
        \max_{\chi \in \mathcal{F}_3(z)} \re\frac{L^\prime}{L}(1,\chi) \geq \frac{1}{2}\left(\log \log z + \log \log \log z - \mathfrak{c}_{\operatorname{cubic}}^+ -\varepsilon \right),
    \end{align*}
    and
    \begin{align*}
        \max_{\chi \in \mathcal{F}_3(z)} - \re\frac{L^\prime}{L}(1,\chi) \geq \log \log z + \log \log \log z - \mathfrak{c}_{\operatorname{cubic}}^- -\varepsilon,
    \end{align*}
    where the constants $\mathfrak{c}_{\operatorname{cubic}}^\pm$ are defined by 
    \begin{align*}
    \mathfrak{c}_{\operatorname{cubic}}^+ &= \log 4 + \gamma + 1 + \sum_{p} \frac{(3p+10)\log p}{(p^3-1)(p+2)} = C_{\operatorname{cubic}}^+ +\gamma-\log 2 , \\
    \mathfrak{c}_{\operatorname{cubic}}^- &= \log 4 + \gamma + 1 + 2 \sum_{p} \frac{\log p}{(p^3-1)(p+2)} = \frac{C_{\operatorname{cubic}}^- -\gamma}{2} - \log 2.
    \end{align*}
\end{thm}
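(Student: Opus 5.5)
The plan is a long-resonator argument in the style of Aistleitner--Mahatab--Munsch--Peyrot and Yang, adapted to the family $\mathcal{F}_3(z)$.

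\textbf{Step 1: short Dirichlet polynomial.} Under GRH for Hecke $L$-functions over $\mathbb{Q}(\omega)$ (which contains GRH for $L(s,\chi)$, $\chi\in\mathcal{F}_3$), I first approximate the logarithmic derivative. For $\chi\in\mathcal{F}_3(z)$ and $X=(\log z)^{A}$, a standard contour-shift/explicit-formula argument gives
\begin{align*}
-\frac{L^\prime}{L}(1,\chi)=\sum_{n\le X}\frac{\Lambda(n)\chi(n)}{n}e^{-n/Y}+O\bigl((\log z)^{-\delta}\bigr)
\end{align*}
with a suitable smoothing, uniformly in $\chi$. This reduces the problem to finding $\chi$ with $\re\sum_{p\le X}\chi(p)\log p/p$ large positive or large negative. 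Prime powers contribute only a bounded amount, and that amount feeds into the constants through sums like $\sum_p \log p/(p^3-1)$.

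\textbf{Step 2: resonator and family sums.} I take a multiplicative resonator $R(\chi)=\prod_{p\in\mathcal{P}}(1+r(p)\chi(p))$. Here $\mathcal{P}$ is a set of primes $p\le y\approx \log z\log\log z$, with $r(p)=\pm$ (or a suitable cube root of unity chosen to make $\re\chi(p)$ extreme) times a weight of shape $1-p/y$. The ratio
\begin{align*}
\frac{\sum_{\chi}\Phi(\mathrm{cond}\chi/z)\,\bigl(-\re\tfrac{L'}{L}(1,\chi)\bigr)|R(\chi)|^2}{\sum_\chi\Phi(\mathrm{cond}\chi/z)|R(\chi)|^2}
\end{align*}
is a lower bound for the max. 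Expanding $|R|^2$ produces sums of $\chi(m)$ over the family. Using the parametrization $\chi(m)=(m/n)_3$ with $n$ primary squarefree in $\mathbb{Z}[\omega]$ and cubic reciprocity, these become character sums over $n$ whose main term comes from $m$ a perfect cube. The off-diagonal terms are controlled by GRH for Hecke $L$-functions, i.e.\ Pólya--Vinogradov-type bounds over $\mathbb{Z}[\omega]$. Since the resonator is long, with $\log\prod_{p\in\mathcal P}p$ comparable to $\log z$, the error must be small relative to the positive resonator mass; the positivity trick of AMMP handles this.

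\textbf{Step 3: evaluating the main term.} The main term is a product over $p$ of local densities. The orthogonality relation $\chi(p)^3=1$ gives $\mathbb{E}[\chi(p)]$-type factors like $r(p)^2$ and $\bar r(p)$ contributions, together with the local factor $(p+2)$ from the density of conductors coprime to $p$. This explains the terms $(p^3-1)(p+2)$ in $\mathfrak{c}^\pm_{\mathrm{cubic}}$. The asymmetry ($\tfrac12$ for the $+$ direction, $1$ for the $-$ direction) arises because $\re\chi(p)\in\{1,-\tfrac12\}$. To make $\re L'/L$ large, one pushes $\chi(p)\in\{\omega,\bar\omega\}$, giving $\re\chi(p)=-1/2$. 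For the other sign, one pushes $\chi(p)=1$. Summing $\sum_{p\le y}\log p/p\approx\log y=\log\log z+\log\log\log z+\dots$ yields the stated main terms. The constants $\log 4+\gamma+1$ come from the smoothing and from optimizing $y$.

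\textbf{Main obstacle.} The main obstacle is Step 2's error term, namely uniform control of $\sum_{\chi\in\mathcal{F}_3(z)}\chi(m)$ for non-cube $m$ up to size $z^{1-\epsilon}$ or more. This bound must be strong enough, since it is summed against a resonator whose length is comparable to $z$. I would first try the large-sieve-free approach: write the family sum as a sum over primary squarefree $n$, detect squarefreeness by Möbius, and apply GRH for the Hecke character $n\mapsto(m/n)_3$ to get $\ll m^\epsilon z^{1/2+\epsilon}$. Then I would choose the resonator support so that the total off-diagonal contribution is $o(z)$ times the diagonal.
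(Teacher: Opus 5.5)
\textbf{Your resonator caps the negative direction at half the claimed size.} With $R(\chi)=\prod_{p\in\mathcal P}(1+r(p)\chi(p))$ and real $r=r(p)$, the weight $\abs{1+r\chi(p)}^2=1+r^2+2r\re\chi(p)$ tilts the distribution of $\chi(p)\in\{1,\omega,\omega^2\}$ only mildly:
\begin{align*}
\frac{\sum_{\zeta^3=1}\re(\zeta)\abs{1+r\zeta}^2}{\sum_{\zeta^3=1}\abs{1+r\zeta}^2}=\frac{r}{1+r^2}\leq\frac12
\end{align*}
for every real $r$. Primes with $\chi(p)=0$ only lower this.

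Since the main terms factor over primes, your ratio is at most $\frac12\sum_{p\le y}\frac{\log p}{p}+O(1)$. So the method gives only $\frac12(\log\log z+\log\log\log z)+O(1)$ for $-\re\frac{L'}{L}(1,\chi)$, and no choice of weights or of a cube root of unity fixes this. For the positive direction, $r(p)=-w_p$ gives a tilted mean of $-\re\chi(p)$ equal to $w_p/(1+w_p^2)\geq w_p/2$, so there the factor $\frac12$ is intrinsic and you are fine at leading order.

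The paper instead uses the Euler-product resonator $R(\chi)=\prod_{p\le\xi}(1-r(p)\chi(p))^{-1}=\sum_n r(n)\chi(n)$, with $r$ completely multiplicative and $r(p)=1-p/\xi$. The weight $\abs{1-r\chi(p)}^{-2}$ concentrates on $\chi(p)=1$ as $r\to1$, and the per-prime ratio becomes $r/(1-r+r^2)\geq r$. For the other sign, $r^\omega(p)=\omega r(p)$ gives real part at most $-r/2$. Its support on all $\xi$-smooth integers is what makes it a long resonator.

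\textbf{The error bound $m^{\epsilon}z^{1/2+\epsilon}$ is too weak.} Your two requirements conflict. To produce $\log\log\log z$ you need essentially all primes up to $\xi\asymp\log z\log\log z$, so $\log\prod_{p\in\mathcal P}p\sim\xi$, which is not comparable to $\log z$. Then $\sum_{k,l}\abs{r(k)r(l)}(kl^2)^{\epsilon}$ is at least $z^{c\epsilon\log\log z}$ for your finite product. For the Euler-product resonator it diverges outright, since $r(2)2^{\epsilon}>1$. Conversely, imposing $\sum_{p\in\mathcal P}\log p\ll_\epsilon\log z$ forces $\sum_{p\in\mathcal P}\frac{\log p}{p}\leq\log\log z+O_\epsilon(1)$.

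The paper's fix is its cubic character-sum lemma (after Darbar--David--Lal\'in--Lumley and Darbar--Maiti). Under GRH the non-cube error is $O(z^{1/2+\varepsilon}g(m_1)f(m_0))$, where $f(m_0)=\exp((\log m_0)^{1-\varepsilon})$ depends only on the cube-free part and $g(m_1)=\sum_{b\mid m_1}\mu(b)^2b^{-1/2-\varepsilon}$ depends only on the cube part. The factor $g$ comes from removing $(\operatorname{cond}\chi,m_1)=1$ by M\"obius over $b\mid m_1$, and $f$ from GRH bounds for the Hecke $L$-function of $(m_0/\cdot)_3$.

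The cube-free part of a $\xi$-smooth $k$ is at most $\prod_{p\le\xi}p^2$, so $f=z^{o(1)}$. Also $\sum_k r(k)g(k)\le z^{B(1+o(1))}$. Hence the total error is $z^{1/2+2B+O(\varepsilon)}=o(z)$ once $B<\frac14$. That constraint is the source of $\log 4$ in $\mathfrak c^{\pm}_{\operatorname{cubic}}$. The terms $\gamma+1$ come from Mertens' theorem and $\xi^{-1}\sum_{p\le\xi}\log p\to1$, not from smoothing.

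Finally, the positivity trick of Aistleitner--Mahatab--Munsch--Peyrot is not available here. The non-cube family sums are signed, so you need an absolute bound of exactly this strength.
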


Theorem \ref{LVEKCcubic} follows immediately from Theorem \ref{LVlogderL} by taking $z=\sqrt{x}$. Indeed, we have 
\begin{align*}
        \max_{\chi \in \mathcal{F}_3(\sqrt{x})} \left( \gamma+2 \re\frac{L^\prime}{L}(1,\chi) \right) &= 2 \max_{\chi \in \mathcal{F}_3(\sqrt{x})} \re \frac{L^\prime}{L}(1,\chi) + \gamma \\
        &\geq \log \log x + \log \log \log x - C_{\operatorname{cubic}}^+ -\varepsilon,
\end{align*}
and
\begin{align*}
        \max_{\chi \in \mathcal{F}_3(\sqrt{x})} -\left( \gamma+2 \re\frac{L^\prime}{L}(1,\chi) \right) &= 2 \max_{\chi \in \mathcal{F}_3(\sqrt{x})} -\re \frac{L^\prime}{L}(1,\chi)- \gamma \\
        &\geq 2\log \log x + 2\log \log \log x - C_{\operatorname{cubic}}^- -\varepsilon.
\end{align*}

This corresponds to Theorem 6.2 of~\cite{CK13}, which gives unconditional estimates for the extreme values. 
Under GRH, we refine these estimates. 
The leading coefficient of the negative extreme values is consistent with a suggestion of Ihara, Kumar Murty and Shimura (see Remark 1 of \cite{IKMS09}).

Let $\mathcal{F}_{3,a}(z)$ be the set of primitive cubic Dirichlet characters of conductor $9^ap_1 \cdots p_t \leq z$ with $a=0$ or $1$, and $p_j \equiv 1 \pmod 3$. 
Clearly, we have $\mathcal{F}_3(z)= \mathcal{F}_{3,0}(z) \cup \mathcal{F}_{3,1}(z)$. 
We can prove Theorem \ref{LVlogderL} for both $\mathcal{F}_{3,0}(z)$ and $\mathcal{F}_{3,1}(z)$.

The proof of Theorem \ref{LVlogderL} requires estimates for cubic character sums. We can use recent result due to Darbar, David, Lalin and Lumley~\cite{DDLL}. 
The asymptotic formula was already known befere~\cite{DDLL}, for examlple~\cite{Cohn}, \cite{DFK04}.

\begin{lem}
\label{lem:DDLLNC}
Assume GRH for Hecke $L$-functions over $\mathbb{Q}(\omega)$. 
For $m=m_0m_1^3$ we have 
\begin{align*}
    \sum_{\chi \in \mathcal{F}_3(z)} \chi(m) = c_3z \prod_{\substack{p \equiv 1 \bmod 3 \\ p \mid m }}\left( \frac{p}{p+2}\right) \mathds{1}_{m= \tinycube} + O\left( z^{\frac{1}{2}+\varepsilon} g(m_1) f(m_0) \right),
\end{align*}
where $\cube$ denotes cubic integers, 
\begin{align}
    \label{funcf-g}
    g(n) := \sum_{b \mid n} \frac{\mu(b)^2}{b^{\frac{1}{2}+\varepsilon}}, \qquad f(n) := \exp \left( (\log n)^{1-\varepsilon}\right),
\end{align}
$m_0$ (resp. $m_1$) is a cube-free component (resp. cubic component) of $m$ and $c_3$ is a positive constant given by
\begin{align}
\label{C_3}
c_3=\frac{11 \sqrt{3}}{18\pi }\prod_{p \equiv 1 \bmod 3} \left(1-\frac{2}{p(p+1)}\right) =0.3170564 \dots.
\end{align}
\end{lem}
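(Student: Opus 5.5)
Lemma~\ref{lem:DDLLNC} is a counting statement for cubic characters twisted by a fixed integer $m$. I would prove it by reducing to sums over primary squarefree Eisenstein integers and then applying a Perron-formula analysis of a Hecke $L$-series under GRH, as in~\cite{DDLL}. First, split $\mathcal{F}_3(z)=\mathcal{F}_{3,0}(z)\cup\mathcal{F}_{3,1}(z)$. A character in $\mathcal{F}_{3,1}$ is $\chi_9^{e_0}\psi$ with $\psi\in\mathcal{F}_{3,0}(z/9)$. Since $\chi_9^{e_0}(m)$ depends only on $m$ modulo $9$ (and vanishes if $3\mid m$), it suffices to treat sums of the form $\sum_{\psi}\psi(m)$ over $\psi\in\mathcal{F}_{3,0}(y)$.

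By the correspondence recalled above (Lemma 2.1 of~\cite{BY10}), such $\psi$ are exactly $m\mapsto (m/n)_3$, with $n\equiv 1 \pmod 3$ squarefree, not divisible by any rational prime, and $N(n)\le y$. Thus
\begin{align*}
\sum_{\psi\in\mathcal{F}_{3,0}(y)}\psi(m)=\sum_{\substack{n\equiv 1 \bmod 3,\ N(n)\le y\\ n \text{ admissible}}}\left(\frac{m}{n}\right)_3 .
\end{align*}
The next step is to form the generating Dirichlet series $\sum_n (m/n)_3 N(n)^{-s}$ over admissible $n$. I would encode the conditions as follows.
\begin{itemize}
\item Squarefreeness is handled by an Euler product.
\item Excluding rational prime divisors is done with M\"obius inversion over $d$, using the factors $\pi\bar\pi=p$.
\end{itemize}
After this, the series factors as $L(s,\xi_m)\,G_m(s)$. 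Here $\xi_m:n\mapsto (m/n)_3$ is a Hecke character of $\mathbb{Q}(\omega)$, which is trivial exactly when $m_0=1$, i.e.\ when $m$ is a cube. The factor $G_m(s)$ is absolutely convergent for $\re s>1/2$ and depends on $m$ only through local factors at $p\mid m$.

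If $m$ is a cube, $(m/n)_3=1$ whenever $(n,m)=1$ and $0$ otherwise. The series then has a simple pole at $s=1$ coming from $\zeta_{\mathbb{Q}(\omega)}$, and its residue gives the main term $c_3 y$. The local factors at $p\equiv 1 \pmod 3$ dividing $m$ produce $p/(p+2)$: such a $p$ contributes $1+2p^{-s}$ generically, and this factor is deleted when $p\mid m$. Adding the two pieces $a=0,1$ (the second with the factor from $\chi_9$ and $y=z/9$) reproduces the constant $c_3$ in \eqref{C_3}. If $m$ is not a cube, $\xi_m$ is a nontrivial Hecke character of conductor dividing a bounded multiple of $m_0$, and there is no pole.

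The error term comes from shifting the Perron contour to $\re s=1/2+\varepsilon$. Under GRH, the Lindel\"of-type bound
\begin{align*}
L(s,\xi)\ll \exp\bigl((\log \operatorname{cond})^{1-\varepsilon}\bigr)
\end{align*}
holds on that line (a standard consequence of GRH via $\log L\ll (\log q)^{2-2\sigma}/\log\log q$). This produces the factor $f(m_0)$. Bounding $G_m$ on the $1/2+\varepsilon$ line produces the divisor-type factor $g(m_1)$, since each prime dividing $m_1$ costs at most $1+p^{-1/2-\varepsilon}$.

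I expect the main obstacle to be the uniformity in $m$ of the error term. This means three things:
\begin{itemize}
\item tracking precisely how the cubic part $m_1$ enters only through coprimality conditions, which is what makes $g(m_1)$ appear rather than something worse;
\item controlling the conductor of $\xi_m$, which involves $m_0$ and its square-type components;
\item truncating Perron's formula with error $z^{1/2+\varepsilon}$ without losing powers of $m$.
\end{itemize}
For this I would follow~\cite{DDLL}, using a smoothed Perron formula so that the horizontal segments are negligible.
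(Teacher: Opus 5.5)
Your plan is the paper's argument in Dirichlet-series form. Both reduce $\mathcal{F}_{3,1}(z)$ to $\mathcal{F}_{3,0}(z/9)$ via $\chi_9(m)+\chi_9(m)^2$, pass to primary squarefree Eisenstein integers with no rational prime factor, and get the error term from Perron's formula and the GRH bound for the cubic Hecke $L$-function on $\re(s)=\frac12+\varepsilon$. The paper strips off $m_1$ by M\"obius inversion over $b\mid(\operatorname{cond}\chi,m_1)$. It then imposes admissibility with the detectors $\sum_{d\mid n}\mu_{\mathbb{Z}}(d)$ and $\sum_{\ell^2\mid n}\mu_{\mathbb{Z}[\omega]}(\ell)$, and applies Perron only to the complete sums $\sum_{N(c)\le Y}(m_0/c)_3$. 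Your factorization $L(s,\xi_m)G_m(s)$ is the same convolution written multiplicatively, and it gives $p/(p+2)$ more transparently, straight from the local factor $1+2p^{-s}$. One bookkeeping point: a split $p\mid m_1$ removes two Euler factors, so the bound you actually get is $\prod_{p\mid m_1}(1+p^{-1/2-\varepsilon})^2$. This is $\ll g(m_1)$ only after shrinking the $\varepsilon$ in $g$, which is what the paper's ``resetting $\varepsilon$'' also does.

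There is one step that fails, and the paper's proof makes the same mistake. You correctly note that $\chi_9(m)=0$ when $3\mid m$, but then assert that adding the pieces $a=0,1$ reproduces $c_3$. The residue for $a=0$ is $\frac{9}{11}c_3y$, not $c_3y$. The $a=1$ piece supplies the missing $2\cdot\frac{9}{11}c_3\cdot\frac{z}{9}=\frac{2}{11}c_3z$ only when $\chi_9(m)+\chi_9(m)^2=2$, i.e.\ when $3\nmid m$.

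If $m=m_1^3$ with $3\mid m_1$, every $\chi\in\mathcal{F}_{3,1}(z)$ has $\chi(m)=0$. Every $\chi\in\mathcal{F}_{3,0}(z)$ with $(\operatorname{cond}\chi,m_1)=1$ has $\chi(m)=1$. So the main term is $\frac{9}{11}c_3z\prod_{p\mid m,\ p\equiv 1 \bmod 3}\frac{p}{p+2}$. For instance, for $m=27$ the sum is exactly $\#\mathcal{F}_{3,0}(z)\sim\frac{9}{11}c_3z$, which contradicts the stated formula with error $O(z^{1/2+\varepsilon})$. The paper's final display adds $\frac{9c_3}{11}\cdot\frac{2z}{9}$ unconditionally and misses this.

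Your argument therefore proves the lemma as stated only for $3\nmid m$. For cubes $m$ divisible by $3$, the main term must carry an extra factor $\frac{9}{11}$; equivalently, the prime $p=3$ needs its own local factor in the product.
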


\begin{rem}
Even if we replace $\mathcal{F}_3(z)$ with $\mathcal{F}_{3,a}(z)$, the same asymptotic formula holds up to a difference in the coefficient $c_3$. Indeed, in the case of $\mathcal{F}_{3,0}(z)$ (resp. $\mathcal{F}_{3,1}(z)$), the coefficient becomes $9c_3/11$ (resp. $2c_3/11$).
\end{rem}

This lemma is essentially proven in Proposition 2.1 and Lemma 6.2 of~\cite{DDLL}, however, a treatment of the cube-free part requires careful handling to apply the resonance method. We therefore provide a proof here. 

\begin{proof}
It suffices to show the case of $\chi \in \mathcal{F}_{3,0}$ since if $\chi \in \mathcal{F}_{3,1}$ then 
\begin{align*}
    \sum_{\chi \in \mathcal{F}_{3,1}(z)} \chi(m) &=(\chi_9(m)+\chi_9(m)^2) \sum_{\chi \in \mathcal{F}_{3,0}(z/9)} \chi(m).
\end{align*}
Put $m=m_0m_1^3$ with cube part $m_1$ and cube-free part $m_0$. Since $\chi(m_1^3)= \chi(m_1)^3 =1$ for $\chi\in \mathcal{F}_3(z)$, we have
\begin{align*}
    \sum_{\chi \in \mathcal{F}_{3,0}(z)} \chi(m_0m_1^3) &= \sum_{\substack{\chi \in \mathcal{F}_3(z) \\ (\operatorname{cond}(\chi),m_1)=1}} \chi(m_0) \\  &= \sum_{\substack{\chi \in \mathcal{F}_{3,0}(z)}} \chi(m_0) \sum_{b \mid (\operatorname{cond}(\chi),m_1)} \mu(b) \\
    &= \sum_{\substack{b \mid m_1 \\ b \equiv 1 \bmod 3}} \mu(b) \sum_{\chi_1} \chi_1(m_0)\sum_{\substack{\psi \in \mathcal{F}_{3,0}(z/b) \\ (\operatorname{cond}(\psi),b)=1 }} \psi(m_0) ,
\end{align*}
where, $\chi_1$ is a primitive cubic character modulo $b$ and $\psi$ is a primitive cubic character whose modulus is coprime to $b$. 
For convenience, we put $u =\operatorname{cond}(\chi_1)$. When $m_0>1$, we get
\begin{align*}
    \sum_{\substack{\psi \in \mathcal{F}_{3,0}(z/b) \\ (\operatorname{cond}(\psi),b)=1 }} \psi(m_0) = \sideset{}{^\prime}\sum_{\substack{n \in \mathbb{Z}[\omega] \\ N(n) \leq z/b \\ (n,b)=1}} \left( \frac{m_0}{n} \right)_3,
\end{align*}
where $\sum^\prime$ indicates that $n$ runs over the integers in $\mathbb{Z}[\omega]$ which are square-free, not divisible by any rational primes, and such that $n \equiv 1 \pmod 3$. 

For $n \equiv 1 \pmod 3$, we use the same detectors as~\cite{BY10} and \cite{DDLL} that are
\begin{align*}
\sum_{\substack{ d \in \mathbb{Z} \\ d \mid n \\ d \equiv 1 \bmod 3}} \mu_{\mathbb{Z}}(d) = \begin{cases}
    1 & \text{ if $n$ is not divisible by any rational primes}, \\
    0 & \text{ otherwise},
\end{cases}
\end{align*}
where $\mu_{\mathbb{Z}}(d)=\mu(\abs{d})$ and 
\begin{align*}
\sum_{\substack{ \ell \in \mathbb{Z}[\omega] \\ \ell^2 \mid n \\ \ell \equiv 1 \bmod 3}} \mu_{\mathbb{Z}[\omega]}(\ell) = \begin{cases}
    1 & \text{ if $n$ is squarefree}, \\
    0 & \text{ otherwise},
\end{cases}
\end{align*}
where, $\mu_{\mathbb{Z}[\omega]}$ denotes the M\"obius function on $\mathbb{Z}[\omega]$. Since we choose a primary element, the divisors $d,\ell$ of $n$ can be as this from. 
Substituting the above detectors, we have
\begin{align*}
    \sideset{}{^\prime}\sum_{\substack{n \in \mathbb{Z}[\omega] \\ N(n) \leq z/b \\ (n,b)=1}} \left( \frac{m_0}{n} \right)_3 
    &=\sum_{\substack{d \in \mathbb{Z} \\ d \equiv 1 \bmod 3 \\ (d,b)=1 \\ N(d) \leq z/b}} \mu_{\mathbb{Z}}(d) \left(\frac{m_0}{d} \right)_3 \sum_{\substack{ \ell \in \mathbb{Z}[\omega] \\ \ell \equiv 1 \bmod 3 \\ (\ell ,bd)=1 \\ N(\ell)\leq \sqrt{z/(bN(d))}}} \mu_{\mathbb{Z}[\omega]}(\ell) \left( \frac{m_0}{\ell^2}\right)_3 \sum_{\substack{c \in \mathbb{Z}[\omega] \\ c \equiv 1 \bmod 3 \\ (c,bd)=1 \\ N(c) \leq z/(bN(d\ell^2))}} \left( \frac{m_0}{c} \right)_3.
\end{align*}
The function $\psi_{m_0}: (c) \mapsto \chi_n(m)=\left(\frac{m_0}{c} \right)_3$ defined on ideals $(c) \subseteq \mathbb{Z}[\omega]$ with $(c, 3)=1$ and $m_0 \equiv 1 \pmod 3$ gives a Hecke character of modulus $9m_0$. Let
\begin{align*}
    L(s,\psi_{m_0}) = \sum_{(n)} \frac{\psi_{m_0}((n))}{N(n)^s} = \sum_{\substack{n \in \mathbb{Z}[\omega] \\ n \equiv 1 \bmod 3}} \frac{\chi_n(m)}{N(n)^s}
\end{align*}
be the corresponding Hecke $L$-function. 

Using the conditional bounds of the Hecke $L$-function and Perron's formula, and directly following the proof of Lemma 6.2 of~\cite{DDLL} and Lemma 1 of~\cite{DM}, we have under GRH
\begin{align*}
    \sum_{\substack{c \in \mathbb{Z}[\omega] \\ c \equiv 1 \bmod 3 \\ (c, b)=1 \\ N(c) \leq z/(bN(d\ell^2))}} \left( \frac{m_0}{c} \right)_3 &\ll Y^{\frac{1}{2}+\varepsilon}2^{\omega(bd)} \exp\left( (\log m_0)^{1-\varepsilon}\right)
\end{align*}
with $Y=z/(bN(d\ell^2))$. Since $\omega(n)$ is additive, we have $\omega(db) \leq \omega(d)+\omega(b)$. By using the estimate $\omega(n) \ll \log n/\log\log n$ (see \cite[Theorem 2.10]{MV}), we obtain
\begin{align*}
\sideset{}{^\prime}\sum_{\substack{n \in \mathbb{Z}[\omega] \\ N(n) \leq z/b \\ (n,b)=1}} \left( \frac{m_0}{n} \right)_3 &\ll \left(\frac{z}{b}\right)^{\frac{1}{2}+\varepsilon} 2^{2\omega(b)}\sum_{\substack{d \in \mathbb{Z} \\ d \equiv 1 \bmod 3 \\ (d,b)=1 \\ \abs{d} \leq \sqrt{z/b}}} \frac{2^{\omega(d)}}{d^{1+2\varepsilon}} \sum_{\substack{ \ell \in \mathbb{Z}[\omega] \\ \ell \equiv 1 \bmod 3 \\ (\ell ,b)=1 \\ N(\ell)\leq \sqrt{z}/(bd)}} \frac{1}{N(\ell)^{1+2\varepsilon}} \exp \left((\log m_0)^{1-\varepsilon} \right) \\
    &\quad \ll \left(\frac{z}{b}\right)^{\frac{1}{2}+\varepsilon} 2^{2\omega(b)}\exp \left((\log m_0)^{1-\varepsilon} \right).
\end{align*}
Since the number of primitive cubic character mod $b$ is $2^{\omega(b)}$, by resetting $\varepsilon$, we get
\begin{align*}
    \sum_{\chi \in \mathcal{F}_{3,0}(z)} \chi(m_0m_1^3) &\ll z^{\frac{1}{2}+\varepsilon} \sum_{b \mid m_1} \frac{\mu(b)^2}{b^{\frac{1}{2}+\varepsilon}} \exp \left((\log m_0)^{1-\varepsilon} \right).
\end{align*}

When $m_0=1$, we have
\begin{align*}
     \sum_{\substack{\chi \in \mathcal{F}_{3,0}(z) \\ (\operatorname{cond}(\chi),m_1)=1}} 1 &= \sum_{\substack{b \mid m_1 \\ b \equiv 1 \bmod 3}} \mu(b) 2^{\omega(b)}\sum_{\substack{\psi \in \mathcal{F}_{3,0}(z/b) \\ (\operatorname{cond}(\psi),b)=1 }} 1.
\end{align*}
By the same argument as Proposition 2.1 of \cite{DDLL} assuming GRH, we have
\begin{align*}
    \sum_{\substack{\psi \in \mathcal{F}_{3,0}(z/b) \\ (\operatorname{cond}(\psi),b)=1 }} 1 &= \frac{9c_3}{11}  \prod_{\substack{p \mid b \\ p \equiv 1 \bmod 3 }} \left(1+\frac{2}{p}\right)^{-1} \frac{z}{b} +O\left( 3^{\omega(b)} \left(\frac{z}{b}\right)^{\frac{1}{2}+\varepsilon}\right).
\end{align*}
Therefore, we have
\begin{align*}
     \sum_{\substack{\chi \in \mathcal{F}_{3,0}(z) \\ (\operatorname{cond}(\chi),m_1)=1}} 1 &= \frac{9c_3}{11}z \sum_{\substack{b \mid m_1 \\ b \equiv 1 \bmod 3}} \frac{\mu(b) 2^{\omega(b)}}{b} \prod_{p \mid b} \left(1+\frac{2}{p}\right)^{-1} + O\left( z^{\frac{1}{2}+\varepsilon} \sum_{ b \mid m_1} \frac{\mu(b)^2}{b^{\frac{1}{2}+\varepsilon}}\right) \\
     &= \frac{9c_3}{11} z \prod_{\substack{p \mid m_1 \\ p \equiv 1 \bmod 3}} \left(1 -\frac{2}{p} \left(1+\frac{2}{p}\right)^{-1} \right) + O\left( z^{\frac{1}{2}+\varepsilon} \sum_{ b \mid m_1} \frac{\mu(b)^2}{b^{\frac{1}{2}+\varepsilon}}\right) \\
     &= \frac{9c_3}{11}z \prod_{\substack{p \mid m_1 \\ p \equiv 1 \bmod 3}} \left(\frac{p}{p+2}\right) + O\left( z^{\frac{1}{2}+\varepsilon} \sum_{ b \mid m_1} \frac{\mu(b)^2}{b^{\frac{1}{2}+\varepsilon}}\right).
\end{align*}
Finally, we get
\begin{align*}
     \sum_{\substack{\chi \in \mathcal{F}_{3}(z) \\ (\operatorname{cond}(\chi),m_1)=1}} 1 &= \sum_{\substack{\chi \in \mathcal{F}_{3,0}(z) \\ (\operatorname{cond}(\chi),m_1)=1}} 1 + \sum_{\substack{\chi \in \mathcal{F}_{3,1}(z) \\ (\operatorname{cond}(\chi),m_1)=1}} 1 \\
     &= \left(\frac{9c_3}{11}z+ \frac{9c_3}{11}\frac{2z}{9}\right) \prod_{\substack{p \mid m_1 \\ p \equiv 1 \bmod 3}} \left(\frac{p}{p+2}\right) + O\left( z^{\frac{1}{2}+\varepsilon} \sum_{ b \mid m_1} \frac{\mu(b)^2}{b^{\frac{1}{2}+\varepsilon}}\right) \\
     &= c_3 z \prod_{\substack{p \mid m_1 \\ p \equiv 1 \bmod 3}} \left(\frac{p}{p+2}\right) + O\left( z^{\frac{1}{2}+\varepsilon} \sum_{ b \mid m_1} \frac{\mu(b)^2}{b^{\frac{1}{2}+\varepsilon}}\right)
\end{align*}
as claimed.
\end{proof}

In order to apply the resonance method, an approximate formula for $-L^\prime/L(1,\chi)$ is required. The following is a cubic analogue of Proposition 2.3 in \cite{Lam15} (see also Lemma \ref{lem:lam} bellow).

\begin{lem}
    Let $0 < \delta <1/2$ be fixed, and $d$ be a conductor large. Let $y \geq (\log f)^{\frac{10}{\delta}}$ be a real number. For primitive cubic Dirichlet character $\chi$ conductor $f$ if $L(s, \chi)$  is non-zero for $\re(s)>1-\delta$ and $\abs{\im(s)} \leq y^{\delta}$, then we have
    \begin{align*}
        -\frac{L^\prime}{L}(1,\chi) = \sum_{n \leq y} \frac{\Lambda(n)\chi(n)}{n} + O\left( y^{-\delta/4}\right).
    \end{align*}
\end{lem}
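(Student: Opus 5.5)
We follow the standard route of Lamzouri's Proposition 2.3: a smoothed Perron formula, followed by a contour shift into the zero-free region. Write $s=1+w$ and use the smooth weight $\frac{y^w-1}{w^2}$ via the identity
\begin{align*}
\frac{1}{2\pi i}\int_{(c)} -\frac{L'}{L}(1+w,\chi)\,\frac{y^w}{w^2}\,dw = \sum_{n\le y}\frac{\Lambda(n)\chi(n)}{n}\log\frac{y}{n},
\end{align*}
with $c=1/\log y$. A cleaner choice is to work directly with a sharp-cutoff truncated Perron formula at height $T=y^{\delta/2}$:
\begin{align*}
\sum_{n\le y}\frac{\Lambda(n)\chi(n)}{n} = \frac{1}{2\pi i}\int_{c-iT}^{c+iT} -\frac{L'}{L}(1+w,\chi)\frac{y^w}{w}\,dw + O\Big(\frac{y^{c}\log^2 y}{T}+\frac{\log y}{y}\Big).
\end{align*}
Here $\sum_n \Lambda(n)n^{-1-c}\ll \log y$, so the error is $O(y^{-\delta/2}\log^2 y)$.

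Next we shift the segment to $\re(w)=-\delta/2$. By hypothesis, $L(s,\chi)$ has no zeros with $\re(s)>1-\delta$ and $\abs{\im s}\le y^\delta$, so in the rectangle the only singularity is the simple pole at $w=0$. Its residue is $-\frac{L'}{L}(1,\chi)$; there is no pole of $L$ since $\chi$ is nonprincipal. This residue is the main term.

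To bound the remaining integrals we need an estimate for $\frac{L'}{L}(\sigma+it,\chi)$ with $\sigma\ge 1-\delta/2$ and $\abs t\le T$. This is the main obstacle. We would apply the Borel--Carathéodory and Hadamard three-circle argument (as in Lamzouri, or Granville--Soundararajan) to $\log L(s,\chi)$. The function $\log L$ is analytic in the zero-free region $\re s>1-\delta$, $\abs{\im s}\le y^\delta$. It satisfies the convexity bound $\re\log L(s,\chi)\le \log\abs{L(s,\chi)}\ll \log(f(\abs t+2))$ there, and $\log L(s,\chi)=O(1)$ near $\re s=2$. Borel--Carathéodory on discs centred at $2+it$ with radii slightly less than $1+\delta$ gives $\abs{\log L}\ll \log (f y)$ on $\re s\ge 1-3\delta/4$. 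Cauchy's estimate on a small disc of radius $\asymp\delta$ then yields $\frac{L'}{L}(s,\chi)\ll_\delta \log(fy)\ll \log y$ for $\re s\ge 1-\delta/2$, $\abs{\im s}\le T$; here we used $\log f\le y^{\delta/10}$.

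With this bound in hand, the left vertical segment contributes $\ll y^{-\delta/2}\log y\cdot\log T$, and the horizontal segments contribute $\ll \frac{\log y}{T}\int_{-\delta/2}^{c}y^{u}\,du\ll y^{-\delta/2}\log y$. Collecting all terms gives a total error of $O(y^{-\delta/2}\log^2 y)=O(y^{-\delta/4})$ for large $y$, which is the claimed estimate. The assumption $y\ge(\log f)^{10/\delta}$ is used only to absorb $\log f$ into powers of $\log y$.
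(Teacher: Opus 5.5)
\textbf{Same route as the paper, but one step is false as written.} Your strategy is the argument of Lamzouri's Proposition 2.3, which the paper invokes without giving a separate proof (nothing cubic is used): a truncated Perron formula at height $T=y^{\delta/2}$, a shift to $\re(w)=-\delta/2$ inside the zero-free rectangle, and a Borel--Carath\'eodory plus Cauchy bound for $L'/L$ there. The faulty step is your claim $\frac{L'}{L}(s,\chi)\ll_\delta\log(fy)\ll\log y$, justified by $\log f\le y^{\delta/10}$. The hypothesis $y\ge(\log f)^{10/\delta}$ gives only $\log f\le y^{\delta/10}$. It does not make $\log f$ comparable to a power of $\log y$: when $y=(\log f)^{10/\delta}$ one has $\log f=y^{\delta/10}$, which is vastly larger than $\log y$. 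So what your Borel--Carath\'eodory argument actually yields on $\re(s)\ge1-\delta/2$, $\abs{\im(s)}\le T$ is
\[
\frac{L'}{L}(s,\chi)\ll_\delta \log f+\log y\ll y^{\delta/10}.
\]
Your stated total error $O(y^{-\delta/2}\log^2y)$ is therefore not justified, and your closing remark that the hypothesis on $y$ only absorbs $\log f$ into powers of $\log y$ misreads its role.

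\textbf{The fix is immediate and uses the same contour.}
\begin{itemize}
\item The left segment contributes $\ll_\delta(\log f+\log y)\,y^{-\delta/2}\log T\ll y^{-2\delta/5}\log y$.
\item The horizontal segments contribute $\ll_\delta(\log f+\log y)/(T\log y)\ll y^{-2\delta/5}$.
\item The Perron truncation error is $O(y^{-\delta/2}\log^2y)$, as you had.
\end{itemize}
Since $2\delta/5>\delta/4$, the total error is $O(y^{-\delta/4})$, as claimed. So the hypothesis on $y$ absorbs $\log f$ into part of the power saving $y^{-\delta/2}$; this is exactly why the lemma only claims $y^{-\delta/4}$. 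With this correction your proof is complete. The smoothed identity you wrote at the start is never used and can be dropped.
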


Let $N(\sigma, T, \chi)$ denote the number of zeros of $L(s, \chi)$ in the rectangle $\sigma<\re(s) \leq 1$ and $\abs{\im(s)} \leq T$. Let $\mathcal{E}_\delta(z)$ be the set of exceptional characters $\chi \in \mathcal{F}_3(z)$ which satisfy $N(1-\delta, (\log z)^{\frac{10}{\delta}},\chi)>0$. Then, we have
\begin{align*}
\#\mathcal{E}_\delta(z) = \sum_{\substack{\chi \in \mathcal{F}_3(z) \\ N(1-\delta,(\log z)^{\frac{10}{\delta}}, \chi)>0}} 1 &= \sum_{q \leq z} \sideset{}{^*}\sum_{\substack{\chi \bmod q \\ \chi^3=\chi_0 \\ N(1-\delta,(\log z)^{\frac{10}{\delta}}, \chi)>0}} 1 \leq \sum_{q \leq z} \sideset{}{^*}\sum_{\chi \bmod q} N(1-\delta,(\log z)^{\frac{10}{\delta}}, \chi).
\end{align*}
Here, $\sum^*$ indicates that the sum is restricted to primitive characters. By using Montgomery's zero density estimates~\cite{Mont} we have
\begin{align*}
    \sum_{\substack{\chi \in \mathcal{F}_3(z) \\ N(1-\delta,(\log z)^{\frac{10}{\delta}}, \chi)>0}} 1 \ll \begin{cases}
    z^{\frac{6\delta}{1+\delta}} (\log z)^{\frac{30}{1+\delta}+9} & \text{ if  } \frac{1}{5} \leq  \delta  \leq \frac{1}{2}, \\
    z^{\frac{4\delta}{1-\delta}} (\log z)^{\frac{20}{1-\delta}+14} & \text{ if  } 0 \leq \delta \leq \frac{1}{5}.
    \end{cases}
\end{align*}
Taking $\delta=\frac{1}{10}$, we have $\# \mathcal{E}_\frac{1}{10}(z) \ll z^\frac{4}{9}(\log z)^{O(1)}$. Therefore, for all except $O(z^{\frac{4}{9}+\varepsilon})$ characters of $\mathcal{F}_3(z)$ 
\begin{align*}
    -\frac{L^\prime}{L}(1,\chi) &= \sum_{n \leq y} \frac{\Lambda(n)\chi(n)}{n}+O\left((\log z)^{-5/2} \right)
\end{align*}
holds. 

\section{Proof of Theorem \ref{LVlogderL}; negative extreme values}\label{section:neg}
The proof of Theorem \ref{LVlogderL} relies on the long resonator method. Throughout this section, we assume GRH for Hecke $L$-functions over $\mathbb{Q}(\omega)$. Let $\xi$ be the parameter $\xi =B \log z \log \log z$, where $B$ is a positive constant chosen in \eqref{B}. We define $r(n)$ as a completely multiplicative function by
\begin{align}
\label{coeff-res}
    r(p)=\left( 1- \frac{p}{\xi} \right)
\end{align}
for $p \leq \xi$ and $r(p)=0$ for $p>\xi$. Then for a primitive cubic character $\chi \in \mathcal{F}_3(z)$, we define the resonator by
\begin{align*}
    R(\chi) := \sum_{n=1}^\infty r(n)\chi(n) = \prod_{p \leq \xi} \left(1 - \left(1-\frac{p}{\xi}\right)\chi(p)\right)^{-1}.
\end{align*}

We remark that, without assuming GRH for Hecke $L$-functions over $\mathbb{Q}(\omega)$, we would not be able to use this resonator. In the unconditional case, we have to restrict the support of $r$ to a finite set of integers because the error terms in $\mathcal{M}_{11}$ and $\mathcal{M}_{21}$ (arise from the non-cubic contribution to the cubic character sums; see Lemma 3.5 in \cite{DDLL}) diverge if the coefficients of the resonator $r$ are supported on the set of all $\xi$-smooth integers. Here, we say that an integer $n$ is $\xi$-smooth if $p \mid n \Rightarrow p\leq \xi$. We denote the set of positive $\xi$-smooth integers by $S(\xi)$. This finite-support restriction is a significant disadvantage compared with allowing $r$ to have infinite support.

We also mention that for quadratic characters, assuming GRH for quadratic Dirichlet $L$-functions, Darbar and Maiti~\cite{DM} obtained sharp bounds for the error terms of quadratic character sums. These sharp conditional bounds allow us to enlarge the support of $r$ (see Section~\ref{quadraticbound}).

Put $\mathcal{E} := \mathcal{E}_\frac{1}{10}(z)$ and $y =(\log z)^{100}$ in the previous section. We define
\begin{align*}
    \mathcal{M}_1 := \sum_{\substack{\chi \in \mathcal{F}_3(z) \\ \chi \notin \mathcal{E}}} \re \left(\sum_{n \leq y} \frac{\Lambda(n)\chi(n)}{n} \right)\abs{R(\chi)}^2 \quad \text{ and } \quad \mathcal{M}_2 := \sum_{\substack{\chi \in \mathcal{F}_3(z) \\ \chi \notin \mathcal{E}}} \abs{R(\chi)}^2.
\end{align*}
Then, the large values of $-L^\prime/L(1,\chi)$ can be found by the following:
\begin{align*}
    \max_{\chi \in \mathcal{F}_3(z)} - \re\frac{L^\prime}{L}(1,\chi) \geq \max_{\substack{\chi \in \mathcal{F}_3(z) \\ \chi \notin \mathcal{E}}} - \re\frac{L^\prime}{L}(1,\chi) &\geq \max_{\substack{\chi \in \mathcal{F}_3(z) \\ \chi \notin \mathcal{E}}} \re \left(\sum_{n \leq y} \frac{\Lambda(n)\chi(n)}{n} \right) +o(1) \\
    &\geq \frac{\mathcal{M}_1}{\mathcal{M}_2} 
    + o(1).
\end{align*}

First, by the definition of the resonator, we have 
\begin{align*}
    \abs{R(\chi)} \leq \prod_{p \leq \xi} \left(1-\abs{r(p)} \right)^{-1} = \prod_{p \leq \xi} \frac{\xi}{p}.
\end{align*}
We apply the prime number theorem and partial summation to obtain
\begin{align*}
\begin{split}
\prod_{p \leq \xi} \frac{\xi}{p} &= \exp \left( \log \xi \sum_{p \leq \xi} 1 - \sum_{p \leq \xi} \log p\right) = \exp \left( \frac{\xi}{\log \xi}\left(1+O\left(\frac{1}{\log \xi} \right) \right) \right).
\end{split}
\end{align*}
Therefore, we have
\begin{align}
\label{UBRS}
    \abs{R(\chi)}^2 &\leq  z^{2B\left(1+O\left(1/\log\log z\right)\right)}.
\end{align}
We turn to estimate $\mathcal{M}_1$. We divide $\mathcal{M}_1$ into
\begin{align*}
    \mathcal{M}_1 &= \sum_{\chi \in \mathcal{F}_3(z)} \sum_{n \leq y} \frac{\Lambda(n)\chi(n)}{n} \abs{R(\chi)}^2 - \sum_{\substack{\chi \in \mathcal{F}_3(z) \\ \chi \in \mathcal{E}}} \sum_{n \leq y} \frac{\Lambda(n)\chi(n)}{n} \abs{R(\chi)}^2.
\end{align*}
We denote the right-hand side by $\mathcal{M}_{11} - \mathcal{M}_{12}$. By \eqref{UBRS}, we have
\begin{align}
\label{M12}
    \mathcal{M}_{12} &\ll \# \mathcal{E} \sum_{n \leq y} \frac{\Lambda(n)}{n} \abs{R(\chi)}^2 \ll z^{\frac{4}{9}+2B+2\varepsilon}.
\end{align}

Since $\overline{\chi(n)} = \chi(n^2)$ for $\chi \in \mathcal{F}_3$, we have 
\begin{align*}
    \mathcal{M}_{11} &= \sum_{n \leq y} \frac{\Lambda(n)}{n} \sum_{k=1}^\infty \sum_{l=1}^\infty r(k)r(l) \sum_{\chi \in \mathcal{F}_3(z)} \chi(nkl^2).
\end{align*} 

By applying Lemma \ref{lem:DDLLNC} under GRH, we get
\begin{align*}
    \mathcal{M}_{11} &= z c_3 \sum_{n \leq y} \frac{\Lambda(n)}{n} \sum_{\substack{k,l=1 \\ nkl^2 = \tinycube}}^\infty r(k)r(l) \prod_{p \mid nkl^2} \left( \frac{p}{p+2}\right) \\
    &\quad + O\left( z^{\frac{1}{2}+\varepsilon} \sum_{n \leq y}\frac{\Lambda(n)}{n} \sum_{\substack{k, l=1 \\ nkl^2 = \tinycube}}^\infty r(k)r(l) \sum_{b \mid nkl^2} \frac{\mu(b)^2}{b^{\frac{1}{2}+\varepsilon}} \right) \\
    &\quad + O\left( z^{\frac{1}{2}+\varepsilon} \sum_{n \leq y}\frac{\Lambda(n)}{n} \sum_{\substack{k, l=1 \\ nkl^2 \neq \tinycube}}^\infty r(k)r(l) \sum_{b \mid s_1} \frac{\mu(b)^2}{b^{\frac{1}{2}+\varepsilon}} \exp\left( (\log s_0)^{1-\varepsilon} \right) \right),
\end{align*} 
where $s_1$ (resp. $s_0$) is cube part (resp. cube-free part) of $nkl^2$.

Since the function $g(n)$ is multiplicative and $g(p^n)=g(p)$, it holds that $g(nkl^2) \leq g(n)g(k)g(l)$. Hence, we have
\begin{align*}
    &z^{\frac{1}{2}+\varepsilon} \sum_{n \leq y}\frac{\Lambda(n)}{n} \sum_{\substack{k, l=1 \\ nkl^2 =\tinycube}}^\infty r(k)r(l) \sum_{b \mid nkl^2} \frac{\mu(b)^2}{b^{\frac{1}{2}+\varepsilon}} \ll z^{\frac{1}{2}+\varepsilon} \sum_{n \leq y}\frac{\Lambda(n)g(n)}{n} \left( \sum_{k=1}^\infty r(k)g(k) \right)^2.
\end{align*}
Then, the prime number theorem implies
\begin{align*}
    \sum_{n \leq y} \frac{\Lambda(n)g(n)}{n} &= \sum_{p \leq y} \frac{\log p}{p} \left( 1+ \frac{1}{p^{\frac{1}{2}+\varepsilon}}\right) +O(1) \ll \log\log z,
\end{align*}
and
\begin{align*}
    \sum_{k=1}^\infty r(k) g(k) &= \prod_{p \leq \xi} \left( \frac{\xi}{p} \left( 1+ \frac{1}{p^{\frac{1}{2}+\varepsilon}}-\frac{p^{\frac{1}{2}-\varepsilon}}{\xi}\right) \right) \\
    &= \exp \left( \pi(\xi) \log \xi - \sum_{p \leq \xi} \log p + \sum_{p \leq \xi}\log \left( 1+\frac{1}{p^{\frac{1}{2}+\varepsilon}}-\frac{p^{\frac{1}{2}-\varepsilon}}{\xi}\right) \right) \\
    &\ll z^{B\left(1+O\left(1/\log\log z\right)\right)}.
\end{align*}
Therefore, 
\begin{align*}
z^{\frac{1}{2}+\varepsilon} \sum_{n \leq y}\frac{\Lambda(n)}{n} \sum_{\substack{k, l=1 \\ nkl^2 =\tinycube}}^\infty r(k)r(l) \sum_{b \mid nkl^2} \frac{\mu(b)^2}{b^{\frac{1}{2}+\varepsilon}} &\ll z^{\frac{1}{2}+2B + 4\varepsilon}.
\end{align*}

For the second error term, let $n_0,k_0,l_0$ denote the squarefree part of $n,k,l$, respectively. By following \cite{DM}, we use the inequality $f(s_0) \leq f(n_0k_0l_0^2) \leq f(n_0)f(k_0) f(l_0^2)$ to obtain
\begin{align*}
& z^{\frac{1}{2}+\varepsilon} \sum_{n \leq y}\frac{\Lambda(n)}{n} \sum_{\substack{k, l=1 \\ nkl^2 \neq \tinycube}}^\infty r(k)r(l) \sum_{b \mid s_1} \frac{\mu(b)^2}{b^{\frac{1}{2}+\varepsilon}} \exp\left( (\log s_0)^{1-\varepsilon} \right) \\
&\ll z^{\frac{1}{2}+\varepsilon} \sum_{n \leq y}\frac{\Lambda(n)g(n) f(n_0)}{n} \left(\sum_{k=1}^\infty r(k)g(k)f(k_0^2) \right)^2.
\end{align*}
From the support condition of $r$, we find that $k_0 \leq \prod_{p \leq \xi} p^2$, and hence we have 
\begin{align*}
    f(k_0^2) = \exp\left( (2 \log k_0)^{1-\varepsilon}\right) \leq \exp\left( \left(4 \sum_{p \leq \xi} \log p \right)^{1-\varepsilon}\right) \leq \exp \left( \left( 4B \log z \log \log z \right)^{1-\varepsilon} \right).
\end{align*}
Similar to the evaluation of the first error term, we have
\begin{align*}
\sum_{n \leq y}\frac{\Lambda(n)g(n) f(n_0)}{n} &= \sum_{p \leq y} \frac{\log p}{p} \left(1+\frac{1}{p^{\frac{1}{2}+\varepsilon}} \right)\exp \left((\log p)^{1-\varepsilon} \right) +O(1)\\
&\ll \exp \left((\log y)^{1-\varepsilon}\right) \sum_{p \leq y} \frac{\log p}{p} \left(1+\frac{1}{p^{\frac{1}{2}+\varepsilon}} \right) \\
&\ll \log z.
\end{align*}
Therefore, 
\begin{align*}
 z^{\frac{1}{2}+\varepsilon} \sum_{n \leq y}\frac{\Lambda(n)}{n} \sum_{\substack{k, l=1 \\ nkl^2 \neq \tinycube}}^\infty r(k)r(l) \sum_{b \mid s_1} \frac{\mu(b)^2}{b^{\frac{1}{2}+\varepsilon}} \exp\left( (\log s_0)^{1-\varepsilon} \right) 
&\ll z^{\frac{1}{2}+2B + 5\varepsilon}.
\end{align*}
Summarizing the above, we obtain
\begin{align}
\begin{split}
    \label{M11}
    \mathcal{M}_{11} &= z c_3 \sum_{n \leq y} \frac{\Lambda(n)}{n} \sum_{\substack{k,l=1 \\ pkl^2 = \tinycube}}^\infty r(k)r(l) \prod_{p\mid nkl^2} \left( \frac{p}{p+2}\right) +O\left(z^{\frac{1}{2}+2B + 5\varepsilon} \right) .
    \end{split}
\end{align}

Next, we analyze $\mathcal{M}_2$. Using the same argument as $\mathcal{M}_1$, we divide
\begin{align*}
    \mathcal{M}_2 &= \sum_{\chi \in \mathcal{F}_3(z)} \abs{R(\chi)}^2 - \sum_{\substack{\chi \in \mathcal{F}_3(z) \\ \chi \in \mathcal{E}}}  \abs{R(\chi)}^2.
\end{align*}
We denote the right-hand side by $\mathcal{M}_{21} - \mathcal{M}_{22}$. Using $\# \mathcal{E} \ll z^{\frac{4}{9}+\varepsilon}$ and \eqref{UBRS}, we have
\begin{align}
\label{M22}
    \mathcal{M}_{22} &\ll z^{\frac{4}{9}+2B+\varepsilon}.
\end{align}

For $\mathcal{M}_{21}$, Lemma \ref{lem:DDLLNC} under GRH implies that
\begin{align*}
    \mathcal{M}_{21} &= z c_3 \sum_{\substack{k,l=1 \\ kl^2 = \tinycube}}^\infty r(k)r(l) \prod_{p \mid kl^2} \left( \frac{p}{p+2}\right) + O\left( z^{\frac{1}{2}+\varepsilon} \sum_{\substack{k, l=1 \\ kl^2 = \tinycube}}^\infty r(k)r(l) \sum_{b \mid kl^2} \frac{\mu(b)^2}{b^{\frac{1}{2}+\varepsilon}} \right) \\
    &\quad + O\left( z^{\frac{1}{2}+\varepsilon} \sum_{\substack{k, l=1 \\ kl^2 \neq \tinycube}}^\infty r(k)r(l) \sum_{b \mid u_1} \frac{\mu(b)^2}{b^{\frac{1}{2}+\varepsilon}} \exp\left( (\log u_0)^{1-\varepsilon} \right) \right),
\end{align*} 
where $u_1$ (resp. $u_0$) is cube part (resp. cube-free part) of $kl^2$.
By the same argument as $\mathcal{M}_{11}$, we have
\begin{align*}
z^{\frac{1}{2}+\varepsilon} \sum_{\substack{k, l=1 \\ kl^2 = \tinycube}}^\infty r(k)r(l) \sum_{b \mid kl^2} \frac{\mu(b)^2}{b^{\frac{1}{2}+\varepsilon}} &\ll z^{\frac{1}{2}+2B + 3\varepsilon},
\end{align*}
and 
\begin{align*}
    z^{\frac{1}{2}+\varepsilon} \sum_{\substack{k, l=1 \\ kl^2 \neq \tinycube}}^\infty r(k)r(l) \sum_{b \mid u_1} \frac{\mu(b)^2}{b^{\frac{1}{2}+\varepsilon}} \exp\left( (\log u_0)^{1-\varepsilon} \right) &\ll z^{\frac{1}{2}+2B + 4\varepsilon}.
\end{align*}

Therefore, we obtain
\begin{align}
    \begin{split}
    \label{M21}
    \mathcal{M}_{21} &= z c_3 \sum_{\substack{k,l=1 \\ kl^2 = \tinycube}}^\infty r(k)r(l) \prod_{p \mid kl^2} \left( \frac{p}{p+2}\right) +O\left( z^{\frac{1}{2}+2B + 4\varepsilon} \right).
    \end{split}    
\end{align}
Now, the constant $B$ can be chosen as
\begin{align}
\label{B}
    B=\frac{e^{-12\varepsilon}}{4}
\end{align}
to satisfy $\frac{4}{9}+2B+2\varepsilon, \frac{1}{2}+2B + 5\varepsilon<1$.

Therefore by \eqref{M12}, \eqref{M11}, \eqref{M22} and \eqref{M21}, we obtain
\begin{align}
\label{ratio}
    \frac{\mathcal{M}_1}{\mathcal{M}_2} = \frac{\mathcal{M}_{11}-\mathcal{M}_{12}}{\mathcal{M}_{21}-\mathcal{M}_{22}}=\frac{\displaystyle \sum_{n \leq y} \frac{\Lambda(n)}{n} \sum_{\substack{k,l=1 \\ nkl^2 = \tinycube}}^\infty r(k)r(l) \prod_{p \mid nkl^2} \left( \frac{p}{p+2}\right)}{\displaystyle \sum_{\substack{k,l=1 \\ kl^2 = \tinycube}}^\infty r(k)r(l) \prod_{p \mid kl^2} \left( \frac{p}{p+2}\right)}\left(1 + o\left( 1\right)\right).
\end{align}
To estimate the above ratio, following~\cite{DDLL}, we decompose $k$ as $k=k_1k_2^2k_3^3k_4^3$, where $k_1, k_2, k_3$ are square-free and coprime in pairs. Similarly, we also decompose $l$ as $l=l_1l_2^2l_3^3l_4^3$, where $l_1,l_2,l_3$ are as well. In this decomposition, for prime $p$ we get $p \mid k_4 \Rightarrow p \mid k_1k_2k_3$ and $p \mid l_4 \Rightarrow p \mid l_1l_2l_3$. We also write $n=q^m$. By these decompositions, we find that if $q^mkl^2= \cube$ (resp. $kl^2 = \cube$), then $q^mk_1k_2^2 l_1^2 l_2=\cube$ (resp. $k_1k_2^2l_1^2l_2 = \cube$). Hence, we have
\begin{align}
\begin{split}
\label{decom}
    & \sum_{\substack{k,l=1 \\ q^mkl^2 = \tinycube}}^\infty r(k)r(l) \prod_{p \mid q^mkl^2} \left( \frac{p}{p+2}\right) \\
    &= \sum_{\substack{k,l=1 \\ q^mkl^2 = \tinycube}}^\infty r(k)r(l) \prod_{p \mid q^mkl^2} \left( \frac{p}{p+2}\right) \\ 
    &=\sum_{k_1,l_1 \in S(\xi)} \mu(k_1)^2\mu(l_1)^2 \sum_{\substack{k_2, l_2 \in S(\xi) \\ (k_2,k_1)=1 \\ (l_2,l_1)=1 \\ q^mk_1k_2^2l_1^2l_2=\cube}} \mu(k_2)^2 \mu(l_2)^2 \sum_{\substack{k_3, l_3 \in S(\xi) \\ (k_3,k_1k_2)=1 \\ (l_3,l_1l_2)=1}} \mu(k_3)^2\mu(l_3)^2 \\
    &\qquad \times \prod_{p \mid qk_1k_2k_3l_1l_2l_3} \left( \frac{p}{p+2}\right) \sum_{\substack{p \mid k_4 \Rightarrow p \mid k_1k_2k_3 \\ p \mid l_4 \Rightarrow p \mid l_1l_2l_3}} r(k_1k_2^2k_3^3k_4^3)r(l_1l_2^2l_3^3l_4^3).
\end{split}
\end{align} 
The complete multiplicativity of $r$ implies that
\begin{align*}
 \sum_{p \mid k_4 \Rightarrow p \mid k_1k_2k_3} r(k_1k_2^2k_3^3k_4^3) &= \prod_{p \mid k_1} \sum_{j=0}^\infty r(p^{3j+1}) \prod_{p \mid k_2} \sum_{j=0}^\infty r(p^{3j+2})\prod_{p \mid k_3} \sum_{j=0}^\infty r(p^{3j+3}) \\
 &= \prod_{p \mid k_1} \frac{r(p)}{1-r(p)^3} \prod_{p \mid k_2} \frac{r(p)^2}{1-r(p)^3} \prod_{p \mid k_3} \frac{r(p)^3}{1-r(p)^3},
\end{align*} 
and hence, the right-hand side on \eqref{decom} is 
\begin{align}
\begin{split}
    \label{decom2}
    & \sum_{k_1,l_1 \in S(\xi)} \mu(k_1)^2\mu(l_1)^2 \sum_{\substack{k_2, l_2 \in S(\xi) \\ (k_2,k_1)=1 \\ (l_2,l_1)=1 \\ q^mk_1k_2^2l_1^2l_2=\cube}} \mu(k_2)^2 \mu(l_2)^2 \sum_{\substack{k_3, l_3 \in S(\xi) \\ (k_3,k_1k_2)=1 \\ (l_3,l_1l_2)=1}} \mu(k_3)^2\mu(l_3)^2 \prod_{p \mid qk_1k_2k_3l_1l_2l_3} \left( \frac{p}{p+2}\right)\\
    &\qquad \times \prod_{p \mid k_1} \frac{r(p)}{1-r(p)^3} \prod_{p \mid k_2} \frac{r(p)^2}{1-r(p)^3} \prod_{p \mid k_3} \frac{r(p)^3}{1-r(p)^3}  \prod_{p \mid l_1} \frac{r(p)}{1-r(p)^3} \prod_{p \mid l_2} \frac{r(p)^2}{1-r(p)^3} \prod_{p \mid l_3} \frac{r(p)^3}{1-r(p)^3}.
\end{split}
\end{align}
From the squarefree and coprime conditions of $k_1,k_2,k_3, l_1,l_2,l_3$ ($k_j,l_j$ are all squarefree and $(k_i,k_j)=(l_i,l_j)=1$ for $i \neq j$), $q^mk_1k_2^2l_1^2l_2=\cube$ if and only if
\begin{enumerate}
    \item $m \equiv 1 \pmod 3$;
        \begin{enumerate}
        \item $k_1=l_1$ and $k_2=ql_2$, 
        \item $qk_1=l_1$ and $k_2=l_2$, 
        \item $k_1=ql_1$ and $qk_2=l_2$,
    \end{enumerate}
    \item $m \equiv 2 \pmod 3$;
    \begin{enumerate}
        \item $k_1=ql_1$ and $k_2=l_2$, 
        \item $k_1=l_1$ and $qk_2=l_2$, 
        \item $qk_1=l_1$ and $k_2=ql_2$,
    \end{enumerate}
    \item  $m \equiv 0 \pmod 3$;
    \begin{enumerate}
    \item $k_1=l_1$ and $k_2=l_2$.
\end{enumerate}
\end{enumerate}
In the case of (1-a), we note that $(k_1l_1l_2k_3,q)=1$, $(k_2,q)=q$, and $(l_3,q)=1$ or $q$. Similarly, in the case of (1-b), $(k_1k_2l_2l_3,q)=1$, $(l_1,q)=q$, and $(k_3,q)=1$ or $q$ hold. On the other hand, in the case of (1-c), we have $(k_1,q)=(l_2,q)=q$ and $(l_1k_2k_3l_3,q)=1$. Hence, in the case of (1-a), we find that \eqref{decom2} is 
\begin{align*}
    & \sum_{\substack{k_1\in S(\xi)\\ (k_1,q)=1}} \mu(k_1)^2 \sum_{\substack{l_2 \in S(\xi) \\(ql_2,k_1)=1 \\ (l_2,q)=1}} \mu(l_2)^2 \sum_{\substack{k_3 \in S(\xi) \\ (k_3,qk_1l_2)=1 }} \mu(k_3)^2 \sum_{\substack{l_3 \in S(\xi) \\ (l_3,k_1l_2)=1}} \mu(l_3)^2 \prod_{p \mid qk_1k_3l_2l_3} \left( \frac{p}{p+2}\right)\\
    &\qquad \times \prod_{p \mid k_1} \left(\frac{r(p)}{1-r(p)^3}\right)^2 \prod_{p \mid ql_2} \frac{r(p)^2}{1-r(p)^3} \prod_{p \mid k_3} \frac{r(p)^3}{1-r(p)^3} \prod_{p \mid l_2} \frac{r(p)^2}{1-r(p)^3} \prod_{p \mid l_3} \frac{r(p)^3}{1-r(p)^3} \\
    &=\frac{r(q)^2}{1-r(q)^3}\sum_{\substack{k_1\in S(\xi)\\ (k_1,q)=1}} \mu(k_1)^2 \prod_{p \mid k_1} \left(\frac{r(p)}{1-r(p)^3}\right)^2 \sum_{\substack{l_2 \in S(\xi) \\(ql_2,k_1)=1 \\ (l_2,q)=1}} \mu(l_2)^2 \prod_{p \mid l_2} \left(\frac{r(p)^2}{1-r(p)^3}\right)^2 \\
    &\quad \sum_{\substack{k_3 \in S(\xi) \\ (k_3,qk_1l_2)=1 }} \mu(k_3)^2 \prod_{p \mid k_3} \frac{r(p)^3}{1-r(p)^3} \sum_{\substack{l_3 \in S(\xi) \\ (l_3,k_1l_2)=1}} \mu(l_3)^2 \prod_{p \mid l_3} \frac{r(p)^3}{1-r(p)^3} \prod_{p \mid qk_1k_3l_2l_3} \left( \frac{p}{p+2}\right).
\end{align*}
We divide the above into two parts according to whether $(l_3,q)=1$ or $(l_3,q)=q$. If $(l_3,q)=1$, then \eqref{decom2} is rewritten as
\begin{align*}
    &=\frac{r(q)^2}{1-r(q)^3}\sum_{\substack{k_1\in S(\xi)\\ (k_1,q)=1}} \mu(k_1)^2 \prod_{p \mid k_1} \left(\frac{r(p)}{1-r(p)^3}\right)^2 \sum_{\substack{l_2 \in S(\xi) \\(ql_2,k_1)=1 \\ (l_2,q)=1}} \mu(l_2)^2 \prod_{p \mid l_2} \left(\frac{r(p)^2}{1-r(p)^3}\right)^2 \\
    &\quad \sum_{\substack{k_3 \in S(\xi) \\ (k_3,qk_1l_2)=1 }} \mu(k_3)^2 \prod_{p \mid k_3} \frac{r(p)^3}{1-r(p)^3} \sum_{\substack{l_3 \in S(\xi) \\ (l_3,qk_1l_2)=1}} \mu(l_3)^2 \prod_{p \mid l_3} \frac{r(p)^3}{1-r(p)^3} \prod_{p \mid qk_1k_3l_2l_3} \left( \frac{p}{p+2}\right) \\
    &= \frac{r(q)^2}{1-r(q)^3} \frac{q}{q+2}  \prod_{\substack{p \leq \xi \\ p \neq q}} \left(1+\frac{p}{p+2}\sum_{(j_1,j_2,j_3,j_4) \in \mathcal{J}} Q(p)\right),
\end{align*}
where 
\begin{align*}
    Q(x)=  \left(\frac{r(x)}{1-r(x)^3}\right)^{2j_1} \left(\frac{r(x)^2}{1-r(x)^3}\right)^{2j_2} \left(\frac{r(x)^3}{1-r(x)^3}\right)^{j_3} \left(\frac{r(x)^3}{1-r(x)^3}\right)^{j_4}
\end{align*}
and
\begin{align*}
    \mathcal{J} &= \{ (1,0,0,0), (0,1,0,0), (0,0,1,0), (0,0,0,1), (0,0,1,1)\}.
\end{align*}
The range of $(j_1,j_2,j_3,j_4)$ can easily be deduced, since $(k_1,l_2)=1$ implies $(j_1,j_2) \neq (1,1)$, and $(k_3, k_1l_2)=(l_3,k_1l_2)=1$ implies $j_3,j_4=1$ only if $j_1=j_2=0$. 

On the other hand, if $(l_3,q)=q$, then \eqref{decom2} is 
\begin{align*}
    &=\frac{r(q)^2}{1-r(q)^3}\sum_{\substack{k_1\in S(\xi)\\ (k_1,q)=1}} \mu(k_1)^2 \prod_{p \mid k_1} \left(\frac{r(p}{1-r(p)^3}\right)^2 \sum_{\substack{l_2 \in S(\xi) \\(ql_2,k_1)=1 \\ (l_2,q)=1}} \mu(l_2)^2 \prod_{p \mid l_2} \left(\frac{r(p)^2}{1-r(p)^3}\right)^2 \\
    &\quad \sum_{\substack{k_3 \in S(\xi) \\ (k_3,qk_1l_2)=1 }} \mu(k_3)^2 \prod_{p \mid k_3} \frac{r(p)^3}{1-r(p)^3} \sum_{\substack{l_3 \in S(\xi) \\ (l_3,k_1l_2)=1 \\ (l_3,q)=q}} \mu(l_3)^2 \prod_{p \mid l_3} \frac{r(p)^3}{1-r(p)^3} \prod_{p \mid qk_1k_3l_2l_3} \left( \frac{p}{p+2}\right) \\
    &= \frac{r(q)^2}{1-r(q)^3} \frac{r(q)^3}{1-r(q)^3}\frac{q}{q+2} \prod_{\substack{p \leq \xi \\ p \neq q}} \left(1+\frac{p}{p+2}\sum_{(j_1,j_2,j_3,j_4) \in \mathcal{J}} Q(p)\right).
\end{align*}
Summarizing the two cases of $(l_3,q)=1$ and $(l_3,q)=q$, in the case of (1-a), we obtain \eqref{decom2} is
\begin{align*}
    &\frac{r(q)^2}{1-r(q)^3} \left(1+\frac{r(q)^3}{1-r(q)^3} \right)\frac{q}{q+2} \prod_{\substack{p \leq \xi \\ p \neq q}} \left(1+\frac{p}{p+2}\sum_{(j_1,j_2,j_3,j_4) \in \mathcal{J}} Q(p)\right) \\
    &=\frac{r(q)^2}{(1-r(q)^3)^2} \frac{q}{q+2} \prod_{\substack{p \leq \xi \\ p \neq q}} \left(1+\frac{p}{p+2}\sum_{(j_1,j_2,j_3,j_4) \in \mathcal{J}} Q(p)\right).
\end{align*}

By the same argument as (1-a), in the case of (1-b) we can deduce that \eqref{decom2} is 
\begin{align*}
    & \sum_{\substack{k_1\in S(\xi)\\ (k_1,q)=1}} \mu(k_1)^2 \sum_{\substack{l_2 \in S(\xi) \\(l_2,qk_1)=1}} \mu(l_2)^2 \sum_{\substack{k_3 \in S(\xi) \\ (k_3,k_1l_2)=1 }} \mu(k_3)^2 \sum_{\substack{l_3 \in S(\xi) \\ (l_3,qk_1l_2)=1}} \mu(l_3)^2 \prod_{p \mid qk_1k_3l_2l_3} \left( \frac{p}{p+2}\right)\\
    &\qquad \times \prod_{p \mid k_1} \frac{r(p)}{1-r(p)^3} \prod_{p \mid qk_1} \frac{r(p)}{1-r(p)^3} \prod_{p \mid l_2} \left(\frac{r(p)^2}{1-r(p)^3}\right)^2 \prod_{p \mid k_3} \frac{r(p)^3}{1-r(p)^3}  \prod_{p \mid l_3} \frac{r(p)^3}{1-r(p)^3} \\
    &=\frac{r(q)}{1-r(q)^3}\sum_{\substack{k_1\in S(\xi)\\ (k_1,q)=1}} \mu(k_1)^2 \prod_{p \mid k_1} \left(\frac{r(p)}{1-r(p)^3}\right)^2 \sum_{\substack{l_2 \in S(\xi) \\(l_2,qk_1)=1}} \mu(l_2)^2 \prod_{p \mid l_2} \left(\frac{r(p)^2}{1-r(p)^3}\right)^2 \\
    &\quad \sum_{\substack{k_3 \in S(\xi) \\ (k_3,k_1l_2)=1 }} \mu(k_3)^2 \prod_{p \mid k_3} \frac{r(p)^3}{1-r(p)^3} \sum_{\substack{l_3 \in S(\xi) \\ (l_3,qk_1l_2)=1}} \mu(l_3)^2 \prod_{p \mid l_3} \frac{r(p)^3}{1-r(p)^3} \prod_{p \mid qk_1k_3l_2l_3} \left( \frac{p}{p+2}\right) \\
    &=\frac{r(q)}{(1-r(q)^3)^2} \frac{q}{q+2}  \prod_{\substack{p \leq \xi \\ p \neq q}} \left(1+ \frac{p}{p+2}\sum_{(j_1,j_2,j_3,j_4) \in \mathcal{J}} Q(p)\right).
\end{align*}

Finally, in the case of (1-c), we have
\begin{align*}
    & \sum_{\substack{l_1\in S(\xi)\\ (l_1,q)=1}} \mu(l_1)^2 \sum_{\substack{k_2 \in S(\xi) \\(k_2,ql_1)=1}} \mu(k_2)^2 \sum_{\substack{k_3 \in S(\xi) \\ (k_3,ql_1k_2)=1 }} \mu(k_3)^2 \sum_{\substack{l_3 \in S(\xi) \\ (l_3,ql_1k_2)=1}} \mu(l_3)^2 \prod_{p \mid k_2k_3l_1l_3} \left( \frac{p}{p+2}\right)\\
    &\qquad \times \prod_{p \mid ql_1} \frac{r(p)}{1-r(p)^3} \prod_{p \mid l_1} \frac{r(p)}{1-r(p)^3} \prod_{p \mid k_2} \frac{r(p)^2}{1-r(p)^3} \prod_{p \mid qk_2} \frac{r(p)^2}{1-r(p)^3} \prod_{p \mid k_3} \frac{r(p)^3}{1-r(p)^3}  \prod_{p \mid l_3} \frac{r(p)^3}{1-r(p)^3} \\
    &=\frac{r(q)^3}{(1-r(q)^3)^2} \frac{q}{q+2} \prod_{\substack{p \leq \xi \\ p \neq q}} \left(1+ \frac{p}{p+2}\sum_{(j_1,j_2,j_3,j_4) \in \mathcal{J}} Q(p)\right).
\end{align*}
Combining the three cases (1-a), (1-b) and (1-c), we deduce that \eqref{decom2} is 
\begin{align}
\label{nume}
    \frac{r(q)+r(q)^2+r(q)^3}{(1-r(q)^3)^2} \frac{q}{q+2} \prod_{\substack{p \leq \xi \\ p \neq q}} \left(1+ \frac{p}{p+2}\sum_{(j_1,j_2,j_3,j_4) \in \mathcal{J}} Q(p)\right).
\end{align}
By symmetry, by the same calculation as (1), we can show that \eqref{decom2} is \eqref{nume} in the case of (2). In the last case (3), we obtain
\begin{align}
\begin{split}
\label{nume2}
    & \sum_{k_1\in S(\xi)} \mu(k_1)^2 \sum_{\substack{k_2 \in S(\xi) \\ (k_2,k_1)=1 }} \mu(k_2)^2  \sum_{\substack{k_3, l_3 \in S(\xi) \\ (k_3,k_1k_2)=1 \\ (l_3,k_1k_2)=1}} \mu(k_3)^2\mu(l_3)^2 \prod_{p \mid qk_1k_2k_3l_1l_2l_3} \left( \frac{p}{p+2}\right)\\
    &\qquad \times \prod_{p \mid k_1} \left(\frac{r(p)}{1-r(p)^3}\right)^2 \prod_{p \mid k_2} \left(\frac{r(p)^2}{1-r(p)^3}\right)^2 \prod_{p \mid k_3} \frac{r(p)^3}{1-r(p)^3}   \prod_{p \mid l_3} \frac{r(p)^3}{1-r(p)^3} \\
    &=\frac{q}{q+2} \prod_{p \leq \xi} \left(1+ \frac{p}{p+2}\sum_{(j_1,j_2,j_3,j_4) \in \mathcal{J}} Q(p)\right).
\end{split}
\end{align}

By the same treatment of the numerator in the case of (3), we calculate the denominator of \eqref{ratio}. If $k_1k_2^2l_1^2l_2 = \cube$, then by recalling the squarefree and coprime conditions on $k_j,l_j$ we find that $k_1=l_1$ and $k_2=l_2$. Hence, we obtain
\begin{align}
\begin{split}
\label{deno}
    & \sum_{\substack{k,l=1 \\ kl^2 = \tinycube}}^\infty r(k)r(l) \prod_{p \mid kl^2} \left( \frac{p}{p+2}\right) \\
    &=\sum_{k_1\in S(\xi)} \mu(k_1)^2 \prod_{p \mid k_1} \left(\frac{r(p)}{1-r(p)^3}\right)^2 \sum_{\substack{l_2 \in S(\xi) \\(l_2,k_1)=1}} \mu(l_2)^2 \prod_{p \mid l_2} \left(\frac{r(p)^2}{1-r(p)^3}\right)^2 \\
    &\quad \sum_{\substack{k_3 \in S(\xi) \\ (k_3,k_1l_2)=1 }} \mu(k_3)^2 \prod_{p \mid k_3} \frac{r(p)^3}{1-r(p)^3} \sum_{\substack{l_3 \in S(\xi) \\ (l_3,k_1l_2)=1}} \mu(l_3)^2 \prod_{p \mid l_3} \frac{r(p)^3}{1-r(p)^3} \prod_{p \mid k_1k_3l_2l_3} \left( \frac{p}{p+2}\right) \\
    &= \prod_{p \leq \xi} \left(1+ \frac{p}{p+2}\sum_{(j_1,j_2,j_3,j_4) \in \mathcal{J}} Q(p)\right).
\end{split}
\end{align}

In the cases of (1) and (2), we have $q \leq \xi$ since $k_j.l_j$ are both $\xi$-smooth numbers and $k_j =ql_j, qk_j=l_j$ hold. Therefore, by \eqref{ratio}, \eqref{nume}, \eqref{nume2} and \eqref{deno}, we obtain
\begin{align*}
    \frac{\mathcal{M}_1}{\mathcal{M}_2} &= \sum_{\substack{q^m \leq \xi \\ m \equiv 1 \bmod 3}} \frac{\log q}{q^m}\frac{r(q)+r(q)^2+r(q)^3}{(1-r(q)^3)^2} \frac{q}{q+2} \left(1+ \frac{q}{q+2}\sum_{(j_1,j_2,j_3,j_4) \in \mathcal{J}} Q(q)\right)^{-1} \\
    &\quad + \sum_{\substack{q^m \leq \xi \\ m \equiv 2 \bmod 3}} \frac{\log q}{q^m}\frac{r(q)+r(q)^2+r(q)^3}{(1-r(q)^3)^2} \frac{q}{q+2} \left(1+ \frac{q}{q+2}\sum_{(j_1,j_2,j_3,j_4) \in \mathcal{J}} Q(q)\right)^{-1} \\
    &\quad +
    \sum_{\substack{q^m \leq y \\ m \equiv 0 \bmod 3}} \frac{\log q}{q^m} \frac{q}{q+2}.
\end{align*}

The function $Q$ is calculated as
\begin{align*}
    \sum_{(j_1,j_2,j_3,j_4) \in \mathcal{J}} Q(q) &= \left(\frac{r(q)}{1-r(q)^3}\right)^{2}+\left(\frac{r(q)^2}{1-r(q)^3}\right)^{2}+\frac{2r(q)^3}{1-r(q)^3} + \left(\frac{r(q)^3}{1-r(q)^3}\right)^2 \\
    &= \frac{r(q)^2+2r(q)^3+r(q)^4-r(q)^6}{(1-r(q)^3)^2}.
\end{align*}
Therefore, we obtain
\begin{align}
\begin{split}
    \label{1-r^3}
    &\frac{r(q)+r(q)^2+r(q)^3}{(1-r(q)^3)^2} \frac{q}{q+2} \left(1+ \frac{q}{q+2}\sum_{(j_1,j_2,j_3,j_4) \in \mathcal{J}} Q(q)\right)^{-1} \\
    &= r(q) \frac{1+r(q)+r(q)^2}{\left(1+\frac{2}{q}\right)(1-r(q)^3)^2+(r(q)^2+2r(q)^3+r(q)^4-r(q)^6)} \\
    &= r(q) \frac{1+r(q)+r(q)^2}{1+r(q)^2+r(q)^4+O\left(\frac{1}{\xi}\right)} \\
    &= \frac{r(q)}{1-r(q)+r(q)^2}\left(1+O\left(\frac{1}{\xi}\right)\right)
\end{split}
\end{align}
since $(1-r(q)^3)^2/q \ll 1/\xi$. Here, the last equality is deduced from the fact $1+x^2+x^4=(1+x+x^2)(1-x+x^2)$. Substituting the above to $\mathcal{M}_1/\mathcal{M}_2$ and using $1-x+x^2 \leq 1$ for $\abs{x} \leq 1$, we obtain
\begin{align*}
    \frac{\mathcal{M}_1}{\mathcal{M}_2} &=\sum_{\substack{q^m \leq \xi \\ m \equiv 1 \pmod 3}} \frac{\log q}{q^m} \frac{r(q)}{1-r(q)+r(q)^2} + \sum_{\substack{q^m \leq \xi \\  m \equiv 2 \pmod 3}} \frac{\log q}{q^m} \frac{r(q)}{1-r(q)+r(q)^2} \\
    &\quad + \sum_{\substack{q^m \leq y \\ m \equiv 0 \pmod 3 \\ m \geq 3}} \frac{\log q}{q^{m}} \frac{q}{q+2} + o(1) \\
    &\geq \sum_{q \leq \xi } \frac{r(q)\log q}{q}+ \sum_{k=2}^\infty \sum_{q}\frac{\log q}{q^k}-2\sum_{q} \frac{\log q}{(q^3-1)(q+2)}+ o(1).
\end{align*}
By using Mertens' estimate, we have
\begin{align}
\label{Mertens}
    \sum_{q \leq \xi} \frac{r(q)\log q}{q} &= \log \xi -\gamma -\sum_{k=2}^\infty \sum_{q} \frac{\log q}{q^k} -1 +o(1),
\end{align}
and hence, we obtain
\begin{align*}
    \frac{\mathcal{M}_1}{\mathcal{M}_2} &\geq \log \xi-\gamma-1 - 2 \sum_{q} \frac{\log q}{(q^3-1)(q+2)}+o\left(1\right) \\
    &= \log \log z +\log\log\log z+\log B-\gamma -1 - 2 \sum_{q} \frac{\log q}{(q^3-1)(q+2)}+o\left(1\right)
\end{align*}
as claimed.

\section{Proof of Theorem \ref{LVlogderL}; positive extreme values}
To prove the existence of large values of $\re (L^\prime/L(1,\chi))$, we slightly modify the resonator as
\begin{align*}
    R_\omega (\chi) = \prod_{p \leq \xi} \left(1-\omega \left(1-\frac{p}{\xi} \right) \chi(p) \right)^{-1} = \sum_{n=1}^\infty r^\omega(n) \chi(n),
\end{align*}
where $r^\omega$ is a completely multiplicative function defined by $r^\omega(p)=\omega r(p)$ and $r$ is defined in \eqref{coeff-res}. From the choice of the resonator, the relation $(1-r^\omega(q)^3)^2/q \ll 1/\xi$ also holds. This plays an important role to deduce a similar result as \eqref{1-r^3}. By the definition, we have $\overline{r^\omega(p)}=\omega^2 r(p)$. We consider the ratio $\mathcal{M}_1^\omega/\mathcal{M}_2^\omega$, where
\begin{align*}
    \mathcal{M}_1^\omega := \sum_{\substack{\chi \in \mathcal{F}_3(z) \\ \chi \notin \mathcal{E}}} \re \left(-\sum_{n \leq y} \frac{\Lambda(n)\chi(n)}{n} \right)\abs{R_\omega(\chi)}^2 \quad \text{ and } \quad \mathcal{M}_2^\omega := \sum_{\substack{\chi \in \mathcal{F}_3(z) \\ \chi \notin \mathcal{E}}} \abs{R_\omega(\chi)}^2.
\end{align*}
Then, by the same treatment as the case of small values, we find that $\mathcal{M}_1^\omega/\mathcal{M}_2^\omega$ is
\begin{align*}
    \frac{\displaystyle - \re \sum_{n \leq y} \frac{\Lambda(n)}{n} \sum_{\substack{k,l=1 \\ nkl^2 = \tinycube}}^\infty r^\omega(k)r^\omega(l) \prod_{p \mid nkl^2} \left( \frac{p}{p+2}\right)}{\displaystyle \sum_{\substack{k,l=1 \\ kl^2 = \tinycube}}^\infty r^\omega(k)r^\omega(l) \prod_{p \mid kl^2} \left( \frac{p}{p+2}\right)}\left(1 + o\left( 1\right)\right).
\end{align*}
Since $r^\omega(p)^3=r(p)^3$, we obtain
\begin{align}
\begin{split}
    \label{ratioOmega}
    \frac{\mathcal{M}_1^\omega}{\mathcal{M}_2^\omega} &= - \re \sum_{\substack{q^m \leq \xi \\ m \equiv 1 \bmod 3}} \frac{\log q}{q^m}\frac{\omega r(q)+\omega^2 r(q)^2+r(q)^3}{(1-r(q)^3)^2} \frac{q}{q+2} \left(1+ \frac{q}{q+2}\sum_{(j_1,j_2,j_3,j_4) \in \mathcal{J}} Q_\omega(q)\right)^{-1} \\
    &\quad - \re \sum_{\substack{q^m \leq \xi \\ m \equiv 2 \bmod 3}} \frac{\log q}{q^m}\frac{\omega r(q)+\omega^2 r(q)^2+r(q)^3}{(1-r(q)^3)^2} \frac{q}{q+2} \left(1+ \frac{q}{q+2}\sum_{(j_1,j_2,j_3,j_4) \in \mathcal{J}} Q_\omega(q)\right)^{-1} \\
    &\quad - 
    \sum_{\substack{q^m \leq y \\ m \equiv 0 \bmod 3}} \frac{\log q}{q^m} \frac{q}{q+2},
\end{split}
\end{align}
where 
\begin{align*}
    \sum_{(j_1,j_2,j_3,j_4) \in \mathcal{J}} Q_\omega(q) 
    &= \frac{\omega^2r(q)^2+2r(q)^3+\omega r(q)^4-r(q)^6}{(1-r(q)^3)^2}.
\end{align*}
Hence, by standard calculation, we get
\begin{align*}
    & \frac{\omega r(q)+\omega^2 r(q)^2+r(q)^3}{(1-r(q)^3)^2} \frac{q}{q+2} \left(1+ \frac{q}{q+2}\sum_{(j_1,j_2,j_3,j_4) \in \mathcal{J}} Q(q)\right)^{-1} \\
    &= \frac{\omega r(q)}{1-\omega r(q)+\omega^2 r(q)^2}\left(1+O\left(\frac{1}{\xi}\right)\right)
\end{align*}
and
\begin{align*}
    \re \frac{\omega r(q)}{1-\omega r(q)+\omega^2 r(q)^2} &=-\frac{r(q)}{2} \frac{1+2r(q)+r(q)^2}{1+r(q)+r(q)^3+r(q)^4} \leq -\frac{r(q)}{2}
\end{align*}
as $(1+2x+x^2)/(1+x+x^3+x^4) \geq 1$ for $0 \leq x \leq 1$. Hence, \eqref{ratioOmega} is
\begin{align*}
    &\geq \frac{1}{2} \sum_{q \leq \xi} \frac{r(q) \log q}{q} + \frac{1}{2} \sum_{k=2}^\infty \sum_q \frac{\log q}{q^k} - \frac{1}{2} \sum_{q} \frac{(3q+10)\log q}{(q^3-1)(q+2)} +o(1) \\
    &= \frac{1}{2} \left( \log \xi -\gamma -1 - \sum_{q} \frac{(3q+10)\log q}{(q^3-1)(q+2)} +o(1) \right).
    \end{align*}
Therefore, 
\begin{align*}
    \max_{\chi \in \mathcal{F}_3(z)} \re\frac{L^\prime}{L}(1,\chi) &\geq \max_{\substack{\chi \in \mathcal{F}_3(z) \\ \chi \notin \mathcal{E}}} \re\frac{L^\prime}{L}(1,\chi) \\&\geq \max_{\substack{\chi \in \mathcal{F}_3(z) \\ \chi \notin \mathcal{E}}} \re \left(-\sum_{n \leq y} \frac{\Lambda(n)\chi(n)}{n} \right) + o(1) \\
    &\geq \frac{\mathcal{M}_1^\omega}{\mathcal{M}_2^\omega} + o(1) \\
    &\geq \frac{1}{2}\left(\log\log z +\log\log\log z +\log B-\gamma -1 - \sum_{q} \frac{(3q+10)\log q}{(q^3-1)(q+2)} +o(1) \right).
\end{align*}

\section{The case of quadratic fields}\label{quadraticbound}
In this section, we give a proof of the conditional bound \eqref{RM-qudratic}. Since
\begin{align*}
    \gamma_{\mathbb{Q}(\sqrt{D})} = \gamma + \frac{L^\prime}{L}(1,\chi_D),
\end{align*}
we show that under GRH for quadratic Dirichlet $L$-functions
\begin{align}
\begin{split}
\label{logderQuadratic}
    \max_{\abs{D} \leq x} \pm \frac{L^\prime}{L}(1,\chi_D) &\geq \log \log x +\log\log\log x - \mathfrak{c}_{\text{quad}}^\pm - \varepsilon,
\end{split}
\end{align}
where $\mathfrak{c}_{\text{quad}}^\pm = C_{\text{quad}}^\pm \pm \gamma$. 
On the other hand, as mentioned in Section \ref{intro}, using Yang's approach, Dong and Li proved the omega result only for the case of $-L^\prime/L(1,\chi_D)$ with 
\begin{align*}
    \mathfrak{c}_{\text{quad}}^- = \log 4+\sum_p \frac{\log p}{p^2-1}+1 + \gamma= 3.5334\dots.
\end{align*}
To improve the constant $\mathfrak{c}_{\text{quad}}^-$ given by Dong and Li, we are required a careful treatment of resonators. Compared to cubic character sums, quadratic character sums have been much studied. To apply the resonance method, we can apply the conditional result for the evaluation of quadratic character sums due to Darbar and Maiti~\cite{DM}. 

\begin{lem}
\label{lem:quadcharsumOrthogo}
Assuming GRH, for $n=n_0n_1^2$ with the squarefree component $n_0$ and the square component $n_1$, we have
\[
\sideset{}{^\prime}{\sum}_{\abs{D} \leq x} \chi_D(n) = \frac{x}{\zeta(2)} \prod_{p\mid n} \frac{p}{p+1} \mathds{1}_{n= \square} + O(x^{\frac{1}{2}+\varepsilon}f(n_0)g(n_1)),
\]
where $\mathds{1}_{n= \square}$ indicates the indicator function of the square numbers, $f(n)$ and $g(n)$ are arithmetic functions defined in \eqref{funcf-g}.
\end{lem}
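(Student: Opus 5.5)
We prove Lemma \ref{lem:quadcharsumOrthogo} along the lines of Lemma \ref{lem:DDLLNC}, with the Eisenstein-ring machinery replaced by the classical treatment of real characters (as in Darbar and Maiti \cite{DM}). Here $\sum^\prime$ runs over fundamental discriminants $D$ with $\abs{D}\le x$.

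\emph{Step 1: removing the square part.} Write $n=n_0n_1^2$. Since $\chi_D(n_1^2)=1$ when $(D,n_1)=1$ and $0$ otherwise, we have
\begin{align*}
\sideset{}{^\prime}\sum_{\abs{D}\le x}\chi_D(n)=\sideset{}{^\prime}\sum_{\substack{\abs{D}\le x\\ (D,n_1)=1}}\chi_D(n_0).
\end{align*}
Detect the coprimality condition by $\sum_{b\mid (D,n_1)}\mu(b)$. We may restrict to squarefree $b$. Since $D$ is fundamental, $b\mid D$ forces $D=bD'$ with $(D',b)=1$ (up to the usual $2$-adic bookkeeping). This reduces the problem to sums over $D'$ in a range of length about $x/b$, twisted by a fixed real character of modulus dividing $b$.

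\emph{Step 2: the main term ($n_0=1$).} Here we only need to count fundamental discriminants coprime to $b$. Split $D$ by its $2$-adic type and detect squarefreeness with $\sum_{a^2\mid d}\mu(a)$. This gives
\begin{align*}
\sideset{}{^\prime}\sum_{\substack{\abs{D}\le x\\(D,b)=1}}1=\frac{x}{\zeta(2)}\prod_{p\mid b}\Big(1+\frac1p\Big)^{-1}+O\big(2^{\omega(b)}x^{1/2+\varepsilon}\big).
\end{align*}
(Stronger error terms are available, but this suffices.) Summing over $b$ with $\mu(b)$, and after the same possible $p=2$ adjustment, the main term becomes
\begin{align*}
\frac{x}{\zeta(2)}\prod_{p\mid n_1}\Big(1-\frac{1}{p+1}\Big)=\frac{x}{\zeta(2)}\prod_{p\mid n}\frac{p}{p+1},
\end{align*}
exactly as in the cubic computation. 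The error terms sum to $x^{1/2+\varepsilon}\sum_{b\mid n_1}\mu(b)^2 2^{\omega(b)}b^{-1/2}\ll x^{1/2+\varepsilon}g(n_1)$, after resetting $\varepsilon$ and using $2^{\omega(b)}\ll b^{\varepsilon}$.

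\emph{Step 3: the error term ($n_0>1$).} Now $m\mapsto \chi_m(n_0)$ is, by quadratic reciprocity, a non-principal character in $m$ of modulus dividing $4n_0$. After the squarefree and coprimality detectors from Steps 1–2, we are left with sums
\begin{align*}
\sum_{c\le Y}\psi(c),\qquad \psi \text{ non-principal of conductor }\ll n_0,
\end{align*}
twisted by characters mod $b$, and with $Y=x/(ba^2)$. Under GRH, Perron's formula with the conditional Lindelöf-type bound
\begin{align*}
\abs{L(\sigma+it,\psi)}\ll \exp\Big(\big(\log (n_0(\abs{t}+2))\big)^{1-\varepsilon}\Big),\qquad \sigma\ge \tfrac12+\varepsilon,
\end{align*}
gives $\sum_{c\le Y}\psi(c)\ll Y^{1/2+\varepsilon}f(n_0)$. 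This is the same input as Lemma 1 of \cite{DM}. Summing over $a\le\sqrt{x/b}$ contributes only $x^{\varepsilon}$, and summing over $b\mid n_1$ produces $g(n_1)$. The result is $O(x^{1/2+\varepsilon}f(n_0)g(n_1))$.

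\emph{Main obstacle.} The delicate point is Step 3. One must keep the dependence on $n_0$ as $f(n_0)=\exp((\log n_0)^{1-\varepsilon})$ rather than a power $n_0^{\varepsilon}$, since $n_0$ can be as large as $e^{O(\xi)}$ in the resonance argument. This requires moving the Perron contour to $\re s=\tfrac12+\varepsilon$ and using the GRH bound for $\log L$ there, uniformly in the conductor. The remaining $2$-adic casework is routine.
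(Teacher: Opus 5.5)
Your proposal is correct and is essentially the argument the paper relies on: the paper gives no separate proof but quotes the lemma from Darbar and Maiti \cite{DM}, and your reduction (M\"obius removal of the square part $n_1$, squarefree detection, and a GRH Perron bound on $\re s=\tfrac12+\varepsilon$ that keeps the conductor dependence in the form $f(n_0)$) is the quadratic counterpart of the paper's own proof of Lemma \ref{lem:DDLLNC}, which itself follows Lemma 1 of \cite{DM}. The only point to phrase carefully is that absorbing $2^{\omega(b)}$ via $2^{\omega(b)}\ll b^{\varepsilon/2}$ gives $g$ with $\varepsilon/2$ in place of $\varepsilon$; this is the same harmless resetting of $\varepsilon$ that the paper performs.
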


In order to utilize the resonance method, we need an approximate of $L^\prime/L(1,\chi_D)$. Proposition 2.3 of~\cite{Lam15}.

\begin{lem}
\label{lem:lam}
Let $0 < \delta<1/2$ be fixed, and $D$ be a fundamental discriminant with $\abs{D}$ large. Let $y \geq (\log \abs{D})^{\frac{10}{\delta}}$ be a real number. If $L(s, \chi_D)$ is non-zero for $\re(s)>1-\delta$ and $\im(s) \leq y^\delta$, then we have
\begin{align}
\label{approx-logderivL}
    -\frac{L^\prime}{L}(1,\chi_D) = \sum_{n \leq y}\frac{\Lambda(n)\chi_D(n)}{n} +O\left( y^{-\frac{\delta}{4}}\right).
\end{align}
\end{lem}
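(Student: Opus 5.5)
This is Lamzouri's Proposition 2.3, and I would prove it by the standard contour-shift argument for the truncated Dirichlet series of the logarithmic derivative. Set $L:=\log\abs{D}$. Apply Perron's formula with a smooth or sharp cutoff at $y$, taking $c=1/\log y$ and $T=y^{\delta}/2$:
\begin{align*}
\sum_{n \leq y}\frac{\Lambda(n)\chi_D(n)}{n} = \frac{1}{2\pi i}\int_{c-iT}^{c+iT} -\frac{L^\prime}{L}(1+s,\chi_D)\frac{y^s}{s}\,ds + O\left(\frac{y^{c}\log^2 y}{T}\right).
\end{align*}
The error term here is $O(y^{-\delta/2})$ after the usual treatment of the sharp-cutoff truncation error (Montgomery--Vaughan, Corollary 5.3).

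Next I move the line of integration to $\re(s)=-\delta/2$. By hypothesis, $L(1+s,\chi_D)$ has no zeros in the rectangle $-\delta<\re(s)$, $\abs{\im(s)}\le y^\delta$. The integrand therefore has only the pole at $s=0$ inside, and its residue is $-L^\prime/L(1,\chi_D)$. This gives the main term.

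It remains to bound $L^\prime/L(1+s,\chi_D)$ on the new contour. The standard tool is Lemma 8.? (Granville--Soundararajan type): if $L(w,\chi)$ has no zeros in the disc of radius $r$ around a point $w_0$ with $\re(w_0)>1$, then on a smaller concentric disc we have $\log L \ll \log(\abs{D}(\abs{\im w}+2))/\,(r\text{-dependent})$. Borel--Carathéodory then gives $L^\prime/L(w,\chi_D)\ll \delta^{-2}\log(\abs{D}y)$ for $\re(w)\ge 1-\delta/2$ and $\abs{\im w}\le y^\delta-1$. Using this bound, the left vertical segment contributes
\begin{align*}
\ll y^{-\delta/2}\log(\abs{D}y)\log T \ll y^{-\delta/2}(\log y)^2,
\end{align*}
where I use $L\le y^{\delta/10}$. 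The horizontal segments contribute $\ll \log(\abs{D}y)\,y^{c}/T\ll y^{-\delta}\log y$.

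All of these bounds are $O(y^{-\delta/4})$ because $y$ is large. The hypothesis $y\ge(\log\abs{D})^{10/\delta}$ is used precisely to absorb the factors of $\log\abs{D}$ into a power of $y$.

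I expect the main obstacle to be the uniform bound for $L^\prime/L$ on the zero-free region near $\re(w)=1-\delta/2$. The plan is to obtain $\log\abs{L(w,\chi_D)}\le\log\log\abs{D}+O(1)$-type bounds from convexity, together with the trivial bound $\log\abs{L}\ll1$ at $\re(w)=1+1/L$. Then I apply Borel--Carathéodory to $\log L$ on nested discs and read off the derivative via Cauchy's estimate, following Lamzouri's proof.
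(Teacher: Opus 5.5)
Your argument is correct and is the standard Perron-formula and contour-shift proof behind Lamzouri's Proposition 2.3, which the paper simply cites rather than reproving. One small slip: $\log(\abs{D}y)$ need not be $\ll \log y$, but inserting $\log\abs{D}\le y^{\delta/10}$ bounds the left vertical segment by $y^{-2\delta/5}\log y$ and the horizontal ones by $y^{-9\delta/10}$, both still $O(y^{-\delta/4})$; also, for Borel--Carath\'eodory the trivial bound $\log\abs{L(w,\chi_D)}\ll\log(\abs{D}(\abs{\im w}+2))$ is all you need, not a $\log\log\abs{D}$ bound.
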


\begin{proof}
    This is a special case of Proposition 2.3 in~\cite{Lam15}.
\end{proof}

We put $y=(\log x)^{10}$ and $\delta=\frac{1}{100}$. Let $\mathscr{E}$ be the set of fundamental discriminant such that (\ref{approx-logderivL}) is not valid under the choice of $y$ and $\delta$. By the zero density estimates due to Heath-Brown~\cite{HB};
\begin{align*}
    N(1-\delta,T,\chi_D) \ll (xT)^\varepsilon x^{\frac{3\delta}{1+\delta}} T^{\frac{1+2\delta}{1+\delta}},
\end{align*}
we have $\# \mathscr{E} \ll x^{\frac{3}{11}+\varepsilon}$. 

Let $\xi= B\log x \log \log x$, where $B$ is a positive constant same as Section \ref{section:neg}. Now, we introduce the following resonator. 
\begin{align*}
    R^\pm(D) := \prod_{p \leq \xi} \left(1 \mp\left( 1 -\frac{p}{z}\right)\chi_D(p)\right)^{-1} = \sum_{n=1}^\infty r^\pm(n)\chi_D(n),
\end{align*}
where $r^\pm(p)=\pm(1-p/\xi)$ for $p<\xi$ and $r^\pm(p)=0$ for $p \geq \xi$. The choice of the resonator is slightly different from the resonator of~\cite{DM} because we need to find the large of $\pm L^\prime/L(1,\chi_D)$.

As the same way as the cubic case, we define
\begin{align*}
    \mathscr{M}_1^\pm := \sideset{}{^\prime}\sum_{\substack{\abs{D} \leq x \\ D \notin \mathscr{E}}} \sum_{n \leq y} \frac{\Lambda(n)\chi_D(n)}{n} \abs{R^\pm(D)}^2 \quad \text{ and } \quad \mathscr{M}_2^\pm := \sideset{}{^\prime}\sum_{\substack{\abs{D} \leq x \\ D \notin \mathscr{E}}} \abs{R^\pm(D)}^2.
\end{align*}
By the same argument as \eqref{UBRS}, we have $\abs{R^\pm(D)}^2 \ll x^{2b(1+O(1/\log x))}$. Hence, we have
\begin{align*}
    \mathscr{M}_1^\pm &= \sideset{}{^\prime}\sum_{\abs{D} \leq x} \sum_{n \leq y} \frac{\Lambda(n)\chi_D(n)}{n} \abs{R^\pm(D)}^2 +O\left( x^{\frac{3}{11}+2b+\varepsilon} \right), \\
    \mathscr{M}_2^\pm &= \sideset{}{^\prime}\sum_{\abs{D} \leq x} \ \abs{R^\pm(D)}^2 +O\left( x^{\frac{3}{11}+2b+\varepsilon} \right).
\end{align*}
By applying Lemma \ref{lem:quadcharsumOrthogo}, the above is equal to
\begin{align*}
   \mathscr{M}_1^\pm &= \sum_{n \leq y} \frac{\Lambda(n)\chi_D(n)}{n} \sum_{k=1}^\infty \sum_{l=1}^\infty r^\pm(k)r^\pm(l) \sideset{}{^\prime}\sum_{\abs{D} \leq x} \chi_D(nkl) +O\left( x^{\frac{3}{11}+2b+\varepsilon}\right)\\
    &= \frac{x}{\zeta(2)} \sum_{n \leq y} \frac{\Lambda(n)}{n} \sum_{\substack{k,l=1 \\ nkl = \square}}^\infty r^\pm(k)r^\pm(l) \prod_{p \mid nkl} \left( \frac{p}{p+1}\right) + O\left( x^{\frac{1}{2}+\varepsilon}\sum_{n \leq y} \frac{\Lambda(n)}{n} \sum_{\substack{k,l=1 \\ nkl = \square}}^\infty r^\pm(k)r^\pm(l)g(s_1) \right)\\
    &\quad + O\left( x^{\frac{1}{2}+\varepsilon}\sum_{n \leq y} \frac{\Lambda(n)}{n} \sum_{\substack{k,l=1 \\ nkl \neq \square}}^\infty r^\pm(k)r^\pm(l)f(s_0)g(s_1) \right)+O\left( x^{\frac{3}{11}+2b+\varepsilon}\right),
\end{align*}
where $s_0$ (resp. $s_1$) is the squarefree (resp. square) component of $nkl$. From the same argument as the cubic case, we have
\begin{align*}
x^{\frac{1}{2}+\varepsilon} \sum_{n \leq y}\frac{\Lambda(n)}{n} \sum_{\substack{k, l=1 \\ nkl =\square}}^\infty r^\pm(k)r^\pm(l)g(s_1)  &\ll x^{\frac{1}{2}+2B + 4\varepsilon},
\end{align*}
and
\begin{align*}
 z^{\frac{1}{2}+\varepsilon} \sum_{n \leq y}\frac{\Lambda(n)}{n} \sum_{\substack{k, l=1 \\ nkl  \neq \square}}^\infty r^\pm(k)r^\pm(l)f(s_0)g(s_1) 
&\ll x^{\frac{1}{2}+2B + 5\varepsilon}.
\end{align*}

Therefore, we obtain
\begin{align}
\begin{split}
\label{ratioQuadratic}
    \frac{\mathscr{M}_1^\pm}{\mathscr{M}_2^\pm} =\frac{\displaystyle \sum_{n \leq y} \frac{\Lambda(n)}{n} \sum_{\substack{k,l=1 \\ nkl^2 = \square}}^\infty r^\pm(k)r^\pm(l) \prod_{\substack{ p \mid nkl}} \left( \frac{p}{p+1}\right)}{\displaystyle \sum_{\substack{k,l=1 \\ kl^2 = \square}}^\infty r^\pm(k)r^\pm(l) \prod_{\substack{p \mid kl}} \left( \frac{p}{p+1}\right)}\left(1 + o\left( 1\right)\right).
\end{split}
\end{align}
As in the similar argument as the cubic case, following~\cite{DM}, we decompose $k$ as $k=k_1k_2^2k_3^2$, where $k_1, k_2$ are square-free and coprime in pairs. Similarly, we also decompose $l$ as $l=l_1l_2^2l_3^2$, where $l_1,l_2$ are as well. In this decomposition, we get $p \mid k_3 \Rightarrow p \mid k_1k_2$ and $p \mid l_3 \Rightarrow p \mid l_1l_2$. We also write $n=q^m$, where $q$ is a prime, since $n$ is supported on only prime powers. By these decompositions, we find that if $nkl = \square$ (resp. $kl=\square$), then $q^m k_1l_1 = \square$ (resp. $k_1l_1=\square$). Hence, we have
\begin{align*}
& \sum_{\substack{k,l=1 \\ q^mkl = \square}}^\infty r^\pm(k)r^\pm(l) \prod_{\substack{p \mid q^mkl}} \left( \frac{p}{p+1}\right) \\
&=  \sum_{\substack{k_1,l_1 \in S(\xi) \\ q^mk_1l_1=\square}} \mu(k_1)^2\mu(l_1)^2 \sum_{\substack{k_2, l_2 \in S(\xi) \\ (k_2,k_1)=1 \\ (l_2,l_1)=1 }} \mu(k_2)^2 \mu(l_2)^2 \prod_{\substack{p \mid qk_1k_2l_1l_2}} \left( \frac{p}{p+1}\right)  \sum_{\substack{p \mid k_3 \Rightarrow p \mid k_1k_2 \\ p \mid l_3 \Rightarrow p \mid l_1l_2}} r^\pm(k_1k_2^2k_3^2)r^\pm(l_1l_2^2l_3^2).
\end{align*}
From the complete multiplicativity of $r^\pm$, we get
\begin{align*}
    \sum_{p\mid k_3 \Rightarrow p \mid k_1k_2} r^\pm(k_1k_2^2k_3^2) &= \prod_{p \mid k_1} \sum_{j=0}^\infty r^\pm(p^{2j+1}) \prod_{p \mid k_2} \sum_{j=0}^\infty r^\pm(p^{2j+2}) = \prod_{p \mid k_1} \frac{r^\pm(p)}{1-r^\pm(p)^2} \prod_{p \mid k_2} \frac{r^\pm(p)^2}{1-r^\pm(p)^2},
\end{align*} 
and hence, we obtain
\begin{align}
\begin{split}
\label{decomQuadratic}
    &\sum_{\substack{k,l=1 \\ q^mkl = \square}}^\infty r^\pm(k)r^\pm(l) \prod_{\substack{p \mid q^mkl}} \left( \frac{p}{p+1}\right) \\ 
    &= \sum_{\substack{k_1,l_1 \in S(\xi) \\ q^mk_1l_1=\square}} \mu(k_1)^2\mu(l_1)^2 \sum_{\substack{k_2, l_2 \in S(\xi) \\ (k_2,k_1)=1 \\ (l_2,l_1)=1 \\ }} \mu(k_2)^2 \mu(l_2)^2 \prod_{p \mid qk_1k_2l_1l_2} \left( \frac{q}{q+1}\right)\\
    &\qquad \times \prod_{p \mid k_1} \frac{r^\pm(p)}{1-r^\pm(p)^2} \prod_{p \mid k_2} \frac{r^\pm(p)^2}{1-r^\pm(p)^2} \prod_{p \mid l_1} \frac{r^\pm(p)}{1-r^\pm(p)^2} \prod_{p \mid l_2} \frac{r^\pm(p)^2}{1-r^\pm(p)^2}.
\end{split}
\end{align}
In the quadratic case, $q^mk_1l_1=\square$ if and only if
\begin{enumerate}[(i)]
    \item $m$ is even and $l_1=k_1$,
    \item $m$ is odd and $l_1=qk_1$,
    \item $m$ is odd and $ql_1=k_1$.
\end{enumerate}
In the case of (i), we have
\begin{align}
\begin{split}
\label{numeQuadratic1}
    & \sum_{\substack{k_1\in S(\xi)}} \mu(k_1)^2 \sum_{\substack{k_2 \in S(\xi) \\ (k_2,k_1)=1 }} \mu(k_2)^2 \sum_{\substack{l_2 \in S(\xi) \\ (l_2,k_1)=1}} \mu(l_3)^2 \prod_{\substack{p \mid qk_1k_2l_2}} \left( \frac{p}{p+1}\right)\\
    &\qquad \times \prod_{p \mid k_1} \frac{r^\pm(p)^2}{(1-r^\pm(p)^2)^2} \prod_{p \mid k_2} \frac{r^\pm(p)^2}{1-r^\pm(p)^2} \prod_{p \mid l_2} \frac{r^\pm(p)^2}{1-r^\pm(p)^2}  \\
    &=\frac{q}{q+1} \prod_{\substack{p \leq \xi}} \left(1+\frac{p}{p+1}P(p)\right),
\end{split}
\end{align}
where
\begin{align*}
P(x)= \frac{{r^\pm(x)}^2}{(1-{r^\pm(x)}^2)^2}+\frac{2{r^\pm(x)}^2}{1-{r^\pm(x)}^2}+\frac{{r^\pm(x)}^4}{(1-{r^\pm(x)}^2)^2} = \frac{3{r^\pm(x)}^2-{r^\pm(x)}^4}{(1-{r^\pm(x)}^2)^2}.
\end{align*}

In the cases of (ii) and (iii), by symmetry, we can calculate \eqref{decomQuadratic} as 
\begin{align}
\begin{split}
\label{numeQuadratic23}
    & 2\sum_{\substack{k_1\in S(\xi)\\ (k_1,q)=1}} \mu(k_1)^2 \sum_{\substack{k_2 \in S(\xi) \\ (k_2,k_1)=1 }} \mu(k_2)^2 \sum_{\substack{l_2 \in S(\xi) \\ (l_2,qk_1)=1}} \mu(l_3)^2 \prod_{p \mid qk_1k_2l_2} \left( \frac{p}{p+1}\right)\\
    &\qquad \times \prod_{p \mid k_1} \frac{r^\pm(p)}{1-r^\pm(p)^2} \prod_{p \mid qk_1} \frac{r^\pm(p)}{1-r^\pm(p)^2} \prod_{p \mid k_2} \frac{r^\pm(p)^2}{1-r^\pm(p)^2} \prod_{p \mid l_2} \frac{r^\pm(p)^2}{1-r^\pm(p)^2} \ \\
    &=\frac{2r^\pm(q)}{1-r^\pm(q)^2}\sum_{\substack{k_1\in S(\xi)\\ (k_1,q)=1}} \mu(k_1)^2 \prod_{p \mid k_1} \left(\frac{r^\pm(p)}{1-r^\pm(p)^2}\right)^2 \sum_{\substack{k_2 \in S(\xi) \\(k_2,k_1)=1}} \mu(k_2)^2 \prod_{p \mid k_2} \frac{r^\pm(p)^2}{1-r^\pm(p)^2} \\
    &\quad \times \sum_{\substack{l_2 \in S(\xi) \\ (l_2,qk_1)=1 }} \mu(l_2)^2 \prod_{p \mid l_2} \frac{r^\pm(p)^2}{1-r^\pm(p)^2} \prod_{p \mid qk_1k_2l_2} \left( \frac{p}{p+1}\right) \\
    &=\frac{2r^\pm(q)}{(1-r^\pm(q)^2)^2}\frac{q}{q+1} \prod_{\substack{p \leq \xi \\ p \neq q}} \left(1+\frac{p}{p+1}P(p)\right).
\end{split}
\end{align}

By the same treatment of the case (i) of the numerator, we calculate the denominator of \eqref{ratioQuadratic} as
\begin{align}
\label{denoQuadratic}
    \sum_{\substack{k,l=1 \\ kl = \square}}^\infty r^\pm(k)r^\pm(l) \prod_{p \mid kl} \left( \frac{p}{p+1}\right) &= \prod_{p \leq \xi} \left(1+ \frac{p}{p+1}P(p)\right).
\end{align}

Therefore, by \eqref{ratioQuadratic}, \eqref{numeQuadratic1}, \eqref{numeQuadratic23} and \eqref{denoQuadratic}, we obtain from the definition of $r^\pm$ that
\begin{align*}
    \frac{\mathscr{M}_1^\pm}{\mathscr{M}_2^\pm} &= \sum_{\substack{q^m \leq \xi \\ m \text{: odd}}} \frac{\log q}{q^m}\frac{2r^\pm(q)}{(1-r^\pm(q)^2)^2} \frac{q}{q+1} \left(1+ \frac{q}{q+1}P(q)\right)^{-1} + \sum_{\substack{q^m \leq y \\ m \text{: even}}} \frac{\log q}{q^m} \frac{q}{q+1}.
\end{align*}
The above can be further calculated as 
\begin{align*}
    \frac{\mathscr{M}_1^\pm}{\mathscr{M}_2^\pm}
    &= \sum_{\substack{q^m \leq \xi \\ m \text{: odd}}} \frac{\log q}{q^m} \frac{2r^\pm(q)}{\left(1+\frac{1}{q}\right)(1-r^\pm(q)^2)^2+(3r^\pm(q)^2-r^\pm(q)^4)}  + \sum_{\substack{q^m \leq y \\ m \text{: even}}} \frac{\log q}{q^m} \frac{q}{q+1}\\
    &= \sum_{\substack{q^m \leq \xi \\ m \text{: odd}}} \frac{\log q}{q^m} \frac{2r^\pm(q)}{1+r^\pm(q)^2}\left(1+O\left(\frac{1}{\xi}\right)\right)  + \sum_{\substack{q^m \leq y \\ m \text{: even}}} \frac{\log q}{q^m} \left(1-\frac{1}{q+1}\right)\\
    &= \sum_{q \leq \xi} \frac{\log q}{q} \frac{2r^\pm(q)}{1+r^\pm(q)^2} + \sum_{k=2}^\infty \sum_q \frac{\log q}{q^k} - \sum_{k=1}^\infty \sum_q \frac{\log q}{q^{2k}(q+1)}.
\end{align*}
Since $2/(1+x^2) \geq 1$ for $\abs{x} \leq 1$, we have
\begin{align*}
     \sum_{q \leq \xi} \frac{\log q}{q} \frac{2r^+(q)}{1+r^+(q)^2} & \geq \sum_{q \leq \xi} \frac{\log q}{q} \left(1-\frac{q}{\xi}\right), \\
     \sum_{q \leq \xi} \frac{\log q}{q} \frac{2r^-(q)}{1+r^-(q)^2} & \leq - \sum_{q \leq \xi} \frac{\log q}{q} \left(1-\frac{q}{\xi}\right).
\end{align*}
Therefore, by using \eqref{Mertens}, we obtain
\begin{align*}
    \max_{\abs{D}\leq x} -\frac{L^\prime}{L}(1,\chi_D) &\geq \max_{\substack{\abs{D}\leq x \\ D \notin \mathscr{E}}} -\frac{L^\prime}{L}(1,\chi_D) \\
    &\geq \max_{\substack{\abs{D}\leq x \\ D \notin \mathscr{E}}}\frac{\mathscr{M}_1^+}{\mathscr{M}_2^+} + o(1) \\
    &\geq  \log \log x +\log\log\log x -\mathfrak{c}_{\text{quad}}^- - \varepsilon,
\end{align*}
and 
\begin{align*}
    \max_{\abs{D}\leq x} \frac{L^\prime}{L}(1,\chi_D) &\geq \max_{\substack{\abs{D}\leq x \\ D \notin \mathscr{E}}} \frac{L^\prime}{L}(1,\chi_D) \\
    &\geq - \min_{\substack{\abs{D}\leq x \\ D \notin \mathscr{E}}}\frac{\mathscr{M}_1^-}{\mathscr{M}_2^-} + o(1) \\
    &\geq \log \log x +\log\log\log x -   \mathfrak{c}_{\text{quad}}^+ -\varepsilon.
\end{align*}
This completes the proof of \eqref{logderQuadratic}.

\begin{ack} 
The author would like to thank Sh\={o}ta Inoue and Masahiro Mine for their fruitful discussions and valuable comments. 
The author is  supported by JSPS KAKENHI (Grant Number:26K16969).
\end{ack} 

\bibliographystyle{plain}

\end{document}